\newtheorem{thm}[]{Theorem}
\newtheorem{lem}[thm]{Lemma}
\newtheorem{cor}[thm]{Corollary}
\newtheorem{prp}[thm]{Proposition}
\newtheorem{rem}[thm]{Remark}
\begin{document}
\title{\textbf{\mathversion{bold} 
Motivic Hilbert zeta functions for curve singularities and related invariants}}
\author{Masahiro WATARI}
\date{}
\maketitle

\begin{abstract}
In the present paper, 
we show that the motivic Hilbert zeta function for a curve singularity yields 
the generating functions for Euler numbers of punctual Hilbert schemes when any punctual Hilbert scheme admits an affine cell decomposition. 
This fact allows us to derive the relations among the motivic Hilbert zeta function and 
other invariants such as the generating function for semi-modules of the  semi-group of the singularity, 
the HOMFLY polynomial and the degrees of Severi strata of the miniversal deformation of the singularity.

As an application of the fact above, we also generalize Kawai's result regarding the generating function for the Euler numbers of a singular curve.
\end{abstract}
\noindent
\textbf{Keywords}: zeta Function,  curve singularity, Hilbert scheme, Euler number, HOMFLY polynomial\\
\textbf{Mathematics Subject Classification (2020)} 14C05, 14G10, 14H20
\section{Introduction} 
Let $C$ and $p$ be a reduced singular curve over $\mathbb{C}$ and its singular point respectively. 
We refer to the pair $(C,p)$ as a curve singularity.
We denote by $K_0(\mathrm{Var}_{\mathbb{C}})$ be the Grothendieck ring of $\mathbb{C}$-varieties. 
This is the free abelian group on isomorphism classes $[X]$ of $\mathbb{C}$-varieties $X$, subject to the relation 
\begin{equation}\label{G ring property 1}
[X]=[Y]+[X\setminus Y]
\end{equation}
where $Y\hookrightarrow X$ is a closed embedding. 
Its multiplication is given by
\begin{equation}\label{G ring property 2}
[X]\cdot[Y]=[(X\times Y)_{\mathrm{red}}]
\end{equation}
on classes of $\mathbb{C}$-varieties. 
For a reduced curve singularity $(C,p)$, the \emph{motivic Hilbert zeta function with support p} is defined by
\begin{equation}\label{MHZF w.s.i. Y}
Z^{\mathrm{Hilb}}_{C,p}(t):=\sum_{l= 0}^{\infty}[C^{[l]}_p]t^l\in 1+tK_0(\mathrm{Var}_{\mathbb{C}})[[t]]
\end{equation}
where $C^{[l]}_p$ consits of length $l$ subschemes of $C$ supported on $p$. 
We refer to $C^{[l]}_p$ as \emph{the punctual Hilbert scheme of degree $l$} for the singularity $(C,p)$. 

Throughout this paper, we always assume that $(C,p)$ satisfies the following condition:

\begin{center}
$(*)$ Any punctual Hilbert schemes $C^{[l]}_p$ admit an affine cell decomposition 
(i.e. $C^{[l]}_p=\bigsqcup_{i} \mathbb{A}^{n_i})$.
\end{center}

It is known that some reduced irreducible plane curve singularities with certain characteristic exponents satisfy the condition\,$(*)$ (see   Remark\,{\rm \ref{rem 10}}). 

Let $\mathbb{L}$ be the class of the affine line $\mathbb{A}^1$ in $K_0(\mathrm{Var}_{\mathbb{C}})$. 
The motivic Hilbert zeta function $Z^{\mathrm{Hilb}}_{C,p}(t)$ is an element of $\mathbb{C}[\mathbb{L}][[t]]$ under the condition\,$(*)$  (see Lemma\,\ref{main thm 1}).  
So we use the notation $Z^{\mathrm{Hilb}}_{C,p}(t, \mathbb{L})$ for it, instead of  $Z^{\mathrm{Hilb}}_{C,p}(t)$. 
Our first result in the present paper is the following theorem:

\begin{thm}\label{main 2}
Let $(C,p)$ be a reduced curve singularity. 
Under the condition $(*)$,  we have
\begin{equation*}
Z^{\mathrm{Hilb}}_{C,p}(q, 1)=\sum_{l=0}^\infty\chi (C^{[l]}_p)q^{l}
\end{equation*}
where $\chi (C^{[l]}_p)$ is the Euler number of $C^{[l]}_p$. 
\end{thm}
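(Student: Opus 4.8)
The plan is to realize the formal substitution $\mathbb{L}=1$ as the specialization of the Euler-characteristic ring homomorphism, and then to compute both sides of the asserted identity coefficientwise through the affine cell decomposition guaranteed by $(*)$.

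First I would recall that the (compactly supported) Euler characteristic induces a ring homomorphism $\chi\colon K_0(\mathrm{Var}_{\mathbb{C}})\to\mathbb{Z}$, $[X]\mapsto\chi(X)$. This is well defined precisely because $\chi$ is additive under the scissor relation \eqref{G ring property 1} and multiplicative under \eqref{G ring property 2}. Since for complex varieties the compactly supported and the ordinary Euler numbers coincide, $\chi([X])$ is the topological Euler number of $X$; in particular $\chi(\mathbb{L})=\chi(\mathbb{A}^1)=1$, and more generally $\chi(\mathbb{A}^n)=1$ for every $n$.

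Next, by Lemma\,\ref{main thm 1} together with condition $(*)$, each class can be written as $[C^{[l]}_p]=\sum_i\mathbb{L}^{n_i}$, which one obtains by applying the scissor relation \eqref{G ring property 1} repeatedly to the cell decomposition $C^{[l]}_p=\bigsqcup_i\mathbb{A}^{n_i}$. Writing this polynomial as $P_l(\mathbb{L})\in\mathbb{Z}[\mathbb{L}]$, its value at $\mathbb{L}=1$ is exactly the number of affine cells. On the other hand, additivity of $\chi$ together with $\chi(\mathbb{A}^{n_i})=1$ gives $\chi(C^{[l]}_p)=\sum_i 1$, i.e.\ the very same count of cells, independently of the dimensions $n_i$. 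Hence $P_l(1)=\chi(C^{[l]}_p)$ for every $l$.

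Finally I would apply $\chi$ coefficientwise to the series \eqref{MHZF w.s.i. Y}. Because $Z^{\mathrm{Hilb}}_{C,p}(t,\mathbb{L})$ lies in $\mathbb{C}[\mathbb{L}][[t]]$, applying $\chi$ to each coefficient agrees with the substitution $\mathbb{L}=1$ (and $t=q$), whence $Z^{\mathrm{Hilb}}_{C,p}(q,1)=\sum_l P_l(1)\,q^l=\sum_l\chi(C^{[l]}_p)\,q^l$, as claimed. I do not expect a serious obstacle: the only point requiring care is the identification of the formal substitution $\mathbb{L}=1$ with the genuine Euler-characteristic homomorphism, which forces one to verify that $\chi$ respects the defining relations of $K_0(\mathrm{Var}_{\mathbb{C}})$ and that $\chi(\mathbb{A}^n)=1$ in whichever convention one adopts.
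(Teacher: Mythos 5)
Your proposal is correct and follows essentially the same route as the paper: both arguments use the affine cell decomposition from condition $(*)$ to write $[C^{[l]}_p]=\sum_i\mathbb{L}^{n_i}$ via the scissor relations, observe that evaluating at $\mathbb{L}=1$ and computing $\chi(C^{[l]}_p)$ each yield the number of cells, and then conclude coefficientwise. Your extra remark that the substitution $\mathbb{L}=1$ is realized by the Euler-characteristic ring homomorphism on $K_0(\mathrm{Var}_{\mathbb{C}})$ is a slightly more systematic packaging of the same computation, but the substance is identical to Lemma\,\ref{main thm 1} and Proposition\,\ref{prp 1} in the paper.
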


Let $\mathbb{N}$ be the set of natural numbers. 
We put $\mathbb{N}_0:=\mathbb{N}\cup\{0\}$. 
A nonempty subset $S$ of  $\mathbb{N}_0$ is called a \emph{numerical semigroup}, if it satisfies the following three condtions:
(1) $S$ is closed under addition, (2) $0\in S$, (3) $\# (\mathbb{N}_0\setminus S) < \infty$. 
Let $\Gamma$ and $\mathrm{Mod}(\Gamma)$ be a numerical semi-group 
and the set of all $\Gamma$-semi-moudules respectively. 
The \emph{generating function of $\Gamma$-semi-modules} $I(\Gamma;q)$ is defined to be 
$$
I(\Gamma;q):=\sum_{\Delta\in \mathrm{Mod}(\Gamma)}q^{\mathrm{codim}(\Delta)}
$$
where  $\mathrm{codim}(\Delta):=\sharp\{\Gamma \setminus \Delta\}$. 
Later we define a numerical semi-group associated with a reduced irreducible curve singularity. 
\begin{thm}\label{main 4}
Let $(C,p)$ be a reduced irreducible curve singularity whose semi-group is $\Gamma$. 
Under the condition $(*)$,  we have
\begin{equation*}
Z^{\mathrm{Hilb}}_{C,p}(q, 1)=I(\Gamma; q).
\end{equation*}
\end{thm}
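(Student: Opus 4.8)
The plan is to reduce the claim to a counting statement via Theorem~\ref{main 2} and then match the count against the combinatorics of $\Gamma$-semimodules. By Theorem~\ref{main 2} we have $Z^{\mathrm{Hilb}}_{C,p}(q,1)=\sum_{l\ge 0}\chi(C^{[l]}_p)\,q^l$, while by definition $I(\Gamma;q)=\sum_{\Delta\in\mathrm{Mod}(\Gamma)}q^{\mathrm{codim}(\Delta)}=\sum_{l\ge 0}\#\{\Delta\in\mathrm{Mod}(\Gamma):\mathrm{codim}(\Delta)=l\}\,q^l$. Hence it suffices to prove, for every $l\ge 0$, the identity $\chi(C^{[l]}_p)=\#\{\Delta\in\mathrm{Mod}(\Gamma):\mathrm{codim}(\Delta)=l\}$.

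I would first set up the algebraic dictionary. Since $(C,p)$ is reduced and irreducible, its completed local ring $\mathcal{O}=\widehat{\mathcal{O}}_{C,p}$ is a domain whose normalization is $\overline{\mathcal{O}}\cong\mathbb{C}[[t]]$, and the order valuation $v$ on $\overline{\mathcal{O}}$ has value semigroup $\Gamma=v(\mathcal{O}\setminus\{0\})$. Under this identification $C^{[l]}_p$ is the variety of ideals $I\subseteq\mathcal{O}$ with $\dim_\mathbb{C}(\mathcal{O}/I)=l$. To each such $I$ I attach its value semimodule $\Delta(I):=v(I\setminus\{0\})$; because $I$ is an ideal, $\Delta(I)+\Gamma\subseteq\Delta(I)$, so $\Delta(I)\in\mathrm{Mod}(\Gamma)$ and $\Delta(I)\subseteq\Gamma$. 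The basic numerical input is the colength formula
\begin{equation*}
\dim_\mathbb{C}(\mathcal{O}/I)=\#\bigl(\Gamma\setminus\Delta(I)\bigr)=\mathrm{codim}(\Delta(I)),
\end{equation*}
which one proves by choosing for each $\gamma\in\Gamma$ an element $g_\gamma\in\mathcal{O}$ of valuation $\gamma$ and checking, by a leading-term (valuation) argument, that $\{\,g_\gamma \bmod I : \gamma\in\Gamma\setminus\Delta(I)\,\}$ is a $\mathbb{C}$-basis of $\mathcal{O}/I$. The assignment $I\mapsto\Delta(I)$ thus partitions $C^{[l]}_p$ into locally closed strata $C_\Delta:=\{I : \Delta(I)=\Delta\}$ indexed by those $\Delta\in\mathrm{Mod}(\Gamma)$ with $\mathrm{codim}(\Delta)=l$.

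It then remains to compute $\chi(C^{[l]}_p)=\sum_{\mathrm{codim}(\Delta)=l}\chi(C_\Delta)$, where I use additivity of the Euler characteristic over the stratification. The heart of the matter is to show that each nonempty stratum $C_\Delta$ is an affine space, so that $\chi(C_\Delta)=1$, and that conversely every $\Delta\in\mathrm{Mod}(\Gamma)$ of codimension $l$ occurs as some $\Delta(I)$. For the first point one fixes, for each minimal generator of $\Delta$ over $\Gamma$, a generator of $I$ with the prescribed leading valuation and reduces the remaining coefficients against $\Gamma$ to a normal form; the free coefficients surviving the reduction provide affine coordinates, exhibiting $C_\Delta\cong\mathbb{A}^{d_\Delta}$ and, in fact, realizing the value-semimodule stratification as the affine cell decomposition guaranteed by condition~$(*)$. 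Granting affineness and nonemptiness, we obtain $\chi(C^{[l]}_p)=\#\{\Delta\in\mathrm{Mod}(\Gamma):\mathrm{codim}(\Delta)=l\}$, and summing against $q^l$ gives $Z^{\mathrm{Hilb}}_{C,p}(q,1)=I(\Gamma;q)$.

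I expect the main obstacle to be precisely this last step: proving that every value-semimodule stratum $C_\Delta$ is an affine space and that this stratification is the one underlying condition~$(*)$. The delicate part is the syzygy bookkeeping behind the normal form — one must show that reducing a system of generators of $I$ against the relations of $\Gamma$ terminates with \emph{unconstrained} free parameters, so that no residual nonlinear equations cut $C_\Delta$ down below an affine space. The colength formula and the nonemptiness (realizability) of each $C_\Delta$ are comparatively routine once the dictionary is in place; controlling the cell dimensions $d_\Delta$ and verifying affineness is exactly where condition~$(*)$ enters.
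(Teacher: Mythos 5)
Your overall route is the same as the paper's: reduce via Theorem~\ref{main 2} to the identity $\chi(C^{[l]}_p)=\#\mathrm{Mod}_l(\Gamma)$, stratify $C^{[l]}_p$ by the value semimodules $\Delta(I)=v(I\setminus\{0\})$, and count strata. The paper obtains the stratification $C^{[l]}_p=\bigsqcup_i\mathcal{I}(\Delta_{l,i})$ --- including your colength formula $\dim_{\mathbb{C}}(R/I)=\#(\Gamma\setminus\Delta(I))$ and the nonemptiness of each stratum --- from the cited Proposition~\ref{decomposition lem}, and then reads condition $(*)$ as saying that each $\mathcal{I}(\Delta_{l,i})$ is an affine space, so that Lemma~\ref{main thm 1} gives $\chi(C^{[l]}_p)=s_l=\#\mathrm{Mod}_l(\Gamma)$ and the generating functions match.

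The one place you should adjust course is the step you yourself flag as the main obstacle: proving by a normal-form/syzygy argument that every stratum $\mathcal{I}(\Delta)$ is an affine space with unconstrained free parameters. That is not a provable lemma in this generality --- for an arbitrary irreducible curve singularity the $\Delta$-subsets need not be affine spaces, and residual equations among the normal-form coefficients genuinely occur; affineness is known only for special classes of singularities (cf.\ Remark~\ref{rem 10}). This is exactly why the theorem carries hypothesis $(*)$: in this paper $(*)$ is used (and, in the examples, verified) as the statement that the stratification (\ref{decomposition}) is itself an affine cell decomposition. So you should not attempt to establish affineness of the strata by hand; invoke it as part of the hypothesis together with Proposition~\ref{decomposition lem}, exactly as the paper does, and then your argument closes as written.
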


In the rest of this section, we restrict ourselves to considering reduced irreducible plane curve singularities. 
In this situation, we argue the relations between $Z^{\mathrm{Hilb}}_{C,p}(t, \mathbb{L})$ and other invariants. 
It is known that the generating function for the Euler numbers of punctual Hilbert schemes is equivalent to other invariants. 
Let $P(L_{(C,p)})$ be the HOMFLY polynomial of the oriented link $L_{(C,p)}$ associated with $(C,p)$. 
The following relation was conjectured by Oblomkov and Shende in \cite{OV} and was finally proved by Maulik in \cite{Ma}:

\begin{equation}\label{Maulik}
\sum_{l=0}^\infty\chi (C^{[l]}_p)q^{2l}=\left( \frac{q}{a}\right)^{\mu-1}P(L_{(C,p)})\Big|_{a=0}
\end{equation}


Shende \cite{S} also  proved the relation
\begin{equation}\label{Severi}
\sum_{l=0}^\infty\chi (C^{[l]}_p)q^{l}=\sum_{l=0}^\delta q^{\delta-l}(1-q)^{2h-1}\mathrm{deg}_{p}\mathbb{V}_h
\end{equation}
where $(C,p)$ is a plane curve singularity (which may not be irreducible) with its delta invariant $\delta$ and $\mathbb{V}_h$'s are the severi strata of the miniversal deformation of $(C,p)$.

Consequently, the following fact follows from Theorem\,\ref{main 2}, \ref{main 4}, (\ref{Maulik}) and (\ref{Severi}). 

\begin{thm}\label{main 3}
Here notations remain the same as above. 
If  $(C,p)$ is a reduced irreducible plane curve singularity that satisfies  the condition $(*)$,  we have
\begin{align}\label{H}
&Z^{\mathrm{Hilb}}_{C,p}(q^2, 1)=I(\Gamma;q^2)=\left( \frac{q}{a}\right)^{\mu-1}P(L_{(C,p)})\Big|_{a=0},\\ \label{S}
&Z^{\mathrm{Hilb}}_{C,p}(q, 1)=I(\Gamma;q)=\sum_{l=0}^\delta q^{\delta-l}(1-q)^{2h-1}\mathrm{deg}_{p}\mathbb{V}_h.
\end{align}
\end{thm}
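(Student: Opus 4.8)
The plan is to obtain this statement as a formal corollary, chaining together the four ingredients listed immediately before it: Theorem~\ref{main 2}, Theorem~\ref{main 4}, the Oblomkov--Shende--Maulik relation~(\ref{Maulik}), and Shende's relation~(\ref{Severi}). First I would check that all four are simultaneously applicable. Since $(C,p)$ is a reduced irreducible plane curve singularity satisfying condition~$(*)$, Theorems~\ref{main 2} and~\ref{main 4} apply directly (each requires only $(*)$), while~(\ref{Maulik}) and~(\ref{Severi}) hold for reduced plane curve singularities and hence a fortiori in the irreducible case considered here.

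To establish~(\ref{S}), I would read off three equalities and concatenate them. Theorem~\ref{main 2} gives $Z^{\mathrm{Hilb}}_{C,p}(q,1)=\sum_{l=0}^{\infty}\chi(C^{[l]}_p)q^{l}$; Theorem~\ref{main 4} identifies the same specialization with $I(\Gamma;q)$; and Shende's relation~(\ref{Severi}) equates $\sum_{l=0}^{\infty}\chi(C^{[l]}_p)q^{l}$ with $\sum_{l=0}^{\delta}q^{\delta-l}(1-q)^{2h-1}\mathrm{deg}_{p}\mathbb{V}_h$. Stringing these together yields the full chain in~(\ref{S}).

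For~(\ref{H}) I would run exactly the same argument after the substitution $q\mapsto q^{2}$, which is precisely the normalization needed to match the $q^{2l}$ on the left-hand side of~(\ref{Maulik}). Thus Theorem~\ref{main 2} gives $Z^{\mathrm{Hilb}}_{C,p}(q^{2},1)=\sum_{l=0}^{\infty}\chi(C^{[l]}_p)q^{2l}$, Theorem~\ref{main 4} gives $Z^{\mathrm{Hilb}}_{C,p}(q^{2},1)=I(\Gamma;q^{2})$, and~(\ref{Maulik}) identifies $\sum_{l=0}^{\infty}\chi(C^{[l]}_p)q^{2l}$ with $(q/a)^{\mu-1}P(L_{(C,p)})|_{a=0}$. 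Concatenating these three equalities gives~(\ref{H}).

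Because every individual equality in the two chains is already supplied by a cited result, I do not expect a substantive obstacle; the only point demanding care is the variable normalization. The Hilbert-scheme and semi-module series are naturally indexed by $q^{l}$, whereas the HOMFLY side of~(\ref{Maulik}) is indexed by $q^{2l}$, so the $q^{2}$-substitution must be tracked consistently throughout the derivation of~(\ref{H}). Once this bookkeeping is handled, the two displayed chains follow immediately.
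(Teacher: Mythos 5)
Your proposal is correct and matches the paper's own derivation exactly: the paper likewise obtains this theorem as an immediate consequence of Theorem~\ref{main 2}, Theorem~\ref{main 4}, the relation~(\ref{Maulik}), and the relation~(\ref{Severi}), with the same $q\mapsto q^{2}$ substitution to align with the HOMFLY normalization. No further comment is needed.
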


\begin{rem}
The equivalence of the HOMFLY polynomial and the generating function of $\Gamma$-semi-modules $I(\Gamma;q)$ in {\rm (\ref{H})} was pointed out by Chavan {\rm (\cite{C})}. 
\end{rem}

This paper is organized as follows:
The purpose of Section\,\ref{preliminary} is to prove Theorem\,\ref{main 2} and \ref{main 4}. 
In Subection\,\ref{proof 1}, we first prove some elementary facts regarding $C_p^{l}$. 
After those, we prove Theorem\,\ref{main 2}. 
In Subsection\,\ref{preliminaries 2}, for the proof of Theorem\,\ref{main 4}, we briefly recall basic notions and facts about punctual Hilbert schemes for reduced irreducible curve singularities. 
In Subection\,\ref{proof 2}, we prove Theorem\,\ref{main 4}.  
We also derive a property of  $I(\Gamma;q)$ from Thorem\ref{main 4}. 
In Section\,\ref{examples}, we calculate motivic Hilbert zeta functions for the curve singularities of types $A_1$, $A_{2d}$, $E_6$ and $E_8$, as examples. 
In Section\,\ref{application}, we consider another topic. 
The generating function of Euler numbers of symmetric products of a nonsingular projective complex curve was calculated. 
Later, Kawai obtained a similar result for an integral complex projective singular curve with the $A_1$ and $A_2$-singularities. 
We generalize Kawai's to the case of an integral complex projective singular curve with the $A_1$, $A_{2i}$ $(i=1,\ldots,d)$ $E_6$ and $E_8$-singularities.

\section{Proof of main theorems}\label{preliminary}
In this section, we prove Theorem\,\ref{main 2} and \ref{main 4}. 
\subsection{Proof of Theorem\,\ref{main 2}}\label{proof 1}

We need the following properties of Euler numbers of complex varieties:
\begin{align}\label{property of chi 1}
&\bullet \chi(X\setminus Y)=\chi(X)- \chi(Y) \mathrm{\ for\ a\ closed\ immersion\ } Y\subset X,\\ \label{property of chi 2}
&\bullet \chi(X\times Y)=\chi(X)\chi(Y),\\ \label{property of chi 3}
&\bullet \chi(\mathbb{A}^1)=1 
\end{align}

We first prove the following lemma:
\begin{lem}\label{main thm 1}
Let $(C,p)$ be a reduced curve singularity that satisfies the condition $(*)$.  
The class $[C^{[l]}_p]$ of $C^{[l]}_p$ in $K_0(\mathrm{Var}_{\mathbb{C}})$ is a polynomial in $\mathbb{C}[\mathbb{L}]$.  
Furthermore, the Euler number $\chi (C^{[l]}_p)$ is equal to the number of affine cells of $C^{[l]}$. 
\end{lem}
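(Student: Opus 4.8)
The plan is to exploit the affine cell decomposition granted by condition $(*)$ together with the defining relations of the Grothendieck ring. By hypothesis, each punctual Hilbert scheme decomposes as a disjoint union of affine cells, $C^{[l]}_p=\bigsqcup_{i=1}^{N_l}\mathbb{A}^{n_i}$ for some integers $n_i\ge 0$ and some finite count $N_l$. First I would argue that this stratification can be realized by closed immersions so that the additive relation \eqref{G ring property 1} applies. Since a cell decomposition into affine spaces can be arranged so that the closure of each cell is a union of cells of lower dimension, one obtains a filtration of $C^{[l]}_p$ by closed subvarieties whose successive differences are precisely the cells $\mathbb{A}^{n_i}$. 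Applying \eqref{G ring property 1} repeatedly along this filtration yields
\[
[C^{[l]}_p]=\sum_{i=1}^{N_l}[\mathbb{A}^{n_i}]=\sum_{i=1}^{N_l}\mathbb{L}^{n_i},
\]
where the last equality uses $[\mathbb{A}^{n}]=\mathbb{L}^{n}$, itself a consequence of \eqref{G ring property 2} (the product $\mathbb{A}^1\times\cdots\times\mathbb{A}^1$ is reduced). This exhibits $[C^{[l]}_p]$ as a polynomial in $\mathbb{L}$ with nonnegative integer coefficients, establishing the first claim, namely that $[C^{[l]}_p]\in\mathbb{C}[\mathbb{L}]$.

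For the second claim I would pass to the Euler characteristic. The properties \eqref{property of chi 1}, \eqref{property of chi 2} and \eqref{property of chi 3} show that $\chi$ factors through $K_0(\mathrm{Var}_\mathbb{C})$ and sends $\mathbb{L}$ to $\chi(\mathbb{A}^1)=1$; more precisely, \eqref{property of chi 2} together with \eqref{property of chi 3} gives $\chi(\mathbb{A}^{n})=1$ for every $n\ge 0$. Applying $\chi$ to the additive decomposition above, using \eqref{property of chi 1} term by term, I obtain
\[
\chi(C^{[l]}_p)=\sum_{i=1}^{N_l}\chi(\mathbb{A}^{n_i})=\sum_{i=1}^{N_l}1=N_l,
\]
which is exactly the number of affine cells. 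Equivalently, this is the specialization $\mathbb{L}\mapsto 1$ of the polynomial $[C^{[l]}_p]$, since each monomial $\mathbb{L}^{n_i}$ contributes $1$.

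I expect the main obstacle to be the first step: justifying rigorously that an abstract affine cell decomposition gives rise to a chain of honest closed immersions to which \eqref{G ring property 1} can be applied. The cleanest route is to order the cells by dimension and set $F_j:=\bigsqcup_{n_i\le j}\mathbb{A}^{n_i}$; one must check that each $F_j$ is closed in $C^{[l]}_p$, which is where the geometric content of an affine cell decomposition (as opposed to a mere set-theoretic partition) is used. Once this filtration is in place, the remaining computations are formal applications of the ring relations and the multiplicativity of $\chi$, so no further difficulty arises. I would remark that the identity $\chi(C^{[l]}_p)=[C^{[l]}_p]\big|_{\mathbb{L}=1}$ foreshadows Theorem \ref{main 2}, where the same specialization is applied to the entire generating series.
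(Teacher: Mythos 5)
Your proposal is correct and follows essentially the same route as the paper: decompose $C^{[l]}_p$ into affine cells, apply the scissor relations to get $[C^{[l]}_p]=\sum_i\mathbb{L}^{n_i}$, and apply the additivity and multiplicativity of $\chi$ together with $\chi(\mathbb{A}^1)=1$ to count the cells. Your extra care in justifying that the cell decomposition yields a filtration by closed subvarieties (so that the relation $[X]=[Y]+[X\setminus Y]$ genuinely applies) addresses a point the paper's proof passes over silently, but it does not change the argument.
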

\begin{proof}
Let $C^{[l]}_p=\sqcup_{i=1}^{s_l} \mathbb{A}^{n_{i}}$ be an affine cell decomposition of $C^{[l]}_p$. 
It follows from (\ref{G ring property 1}) and (\ref{G ring property 2}) that
\begin{equation*}
[C^{[l]}_p]=\Big [ \bigsqcup_{i=1}^{s_l} \mathbb{A}^{n_{i}} \Big ]=\sum_{i=1}^{s_l} \left [\mathbb{A}^{n_{i}}\right ]=\sum_{i=1}^{s_l} \mathbb{L}^{n_{i}}.
\end{equation*}
It also follows from (\ref{property of chi 1}), (\ref{property of chi 2}) and (\ref{property of chi 3}) that
\begin{equation*}
\chi (C^{[l]}_p)=\chi\Big ( \bigsqcup_{i=1}^{s_l} \mathbb{A}^{n_{i}} \Big )=\sum_{i=1}^{s_l}\chi \left (\mathbb{A}^{n_{i}}\right )=s_l.
\end{equation*}
Our assertions are proved. 
\end{proof}
Setting $f^{(l)}_{C,p}(\mathbb{L}):=[C^{[l]}_p]$ under the condition $(*)$,  
 Lemma\,\ref{main thm 1} implies the following fact:
\begin{prp}\label{prp 1}
Let $(C,p)$  be a reduced curve singularity. 
Under the condition $(*)$, we have
$$
f^{(l)}_{C,p}(1)=\chi(C^{[l]}_p).
$$
\end{prp}

\noindent
\textit{Proof of Theorem\,$\ref{main 2}$.}
It follows from (\ref{MHZF w.s.i. Y}) and Lemma\,\ref{main thm 1} that
$$
Z^{\mathrm{Hilb}}_{C,p}(t, \mathbb{L})=\sum _{l=0}^{\infty}f^{(l)}_{C,p}(\mathbb{L})t^l.
$$
Finally, our assertion follows from Propostion\,\ref{prp 1}.  $\square$

\subsection{Preliminaries for the proof of Theorem\,\ref{main 4}}\label{preliminaries 2}
In the rest of this section, we always assume that $(C,p)$ is reduced and irreducible. 
For the proof of Theorem\,\ref{main 4}, we briefly summarize the properties of punctual Hilbert schemes for $(C,p)$.

Let $\mathcal{O}_{C,p}$ be  the local ring of $C$ at $p$. 
We denote by  $R:=\widehat{\mathcal{O}}_{C,p}$ the completion of  $\mathcal{O}_{C,p}$. 
Note that the structure of $C^{[l]}_p$ depends only on 
$R$. 
In fact, there is the following natural identification:

$$
C^{[l]}_p=\{I\subset R|\,\text{$I$ is an ideal of $R$ with $\dim_{\mathbb{C}} R/I=l$}\}
$$ 

Note that the normalization $\overline{R}$ of $R$ is isomorphic to $k[[t]]$. 
We call $\Gamma:=\{\text{ord}_t(f)\,|\,f\in R\}$ the \emph{semi-group} of $R$ (or of singularity $(C,p)$). 
We denote a semi-group by $\Gamma=\langle\gamma_1,\gamma_2,\cdots,\gamma_p\rangle$, if it is minimally generated by $\gamma_1,\gamma_2,\cdots,\gamma_p$.
The positive integer $\delta:=\dim _{\mathbb{C}}(\overline{R}/R)$ is called the \emph{$\delta$-invariant} of $R$.  
The \emph{conductor} $c$ of $\Gamma$ is the element of $\Gamma$ with the properties $c-1\notin \Gamma$ and $c+n\in \Gamma$ for any $n\in \mathbb{N}_0$. 

For a positive integer $a$, we denote by $(t^a)$  an ideal of $\overline{R}$.
Let $\mathrm{Gr}\left(\delta,\overline{R}/(t^{2\delta})\right)$ be the Grassmannian which consists of $\delta$-dimensional linear subspaces of $\overline{R}/(t^{2\delta})$.
For $M\in\mathrm{Gr}\left(\delta,\overline{R}/(t^{2\delta})\right)$, we define a multiplication by $R\times M\ni (f,m+(t^{2\delta}))\mapsto fm+(t^{2\delta})\in M$. 
The set
$$
J_{C,p}:=\left\{M\in \mathrm{Gr}\left(\delta,\overline{R}/(t^{2\delta})\right)\big|\,\text{$M$ is an $R$-sub-module w.r.t the above multiplication}\right\}
$$
is called the \emph{Jacobian factor} of $(C,p)$, which was introduced by Rego in \cite{Re}. 
Observe that any element $I$ in $C^{[l]}_p$ satisfies $(t^{l+2\delta}) \cap R \subseteq I\subseteq (t^l)\cap R$. 
It follows that $(t^{2\delta})\subseteq t^{-l}I \subseteq \overline{R}$ and $\dim_{\mathbb{C}}(\overline{R}/t^{-l}I)=\dim_{\mathbb{C}}(\overline{R}/I)-l=\delta$. 
Pfister and Steenbrink \cite{PS} defined a map 
\begin{equation}\label{phai}
\varphi_l: C^{[l]}_p\rightarrow J\subset \mathrm{Gr}(\delta, \overline{R}/(t^{2\delta}))
\end{equation}
by $\varphi _l (I)=t^{-l}I/(t^{2\delta})$. 
We call $\varphi _l$ the \emph{$\delta$-normalized embedding}. 
It has the following properties:

\begin{prp}[\cite{PS}, Theorem\,3]\label{thm3}
The $\delta$-normalized embedding $\varphi_l$  is injective for any non-negative integer $l$. 
It is also bijective for $l\ge c$. 
The image $\varphi _l (C^{[l]}_p)$  is Zariski closed in the Jacobian factor $J_{C,p}$. 
\end{prp}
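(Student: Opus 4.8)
\noindent\textit{Proof strategy.}
The plan is to study $\varphi_l$ through an explicit candidate inverse and to read off all three assertions from it. For $I\in C^{[l]}_p$ write $\widetilde{M}_I:=t^{-l}I$; by the containments recorded just before the statement this is an $R$-submodule of $\overline{R}$ satisfying $(t^{2\delta})\subseteq \widetilde{M}_I\subseteq \overline{R}$ and $\dim_{\mathbb{C}}(\overline{R}/\widetilde{M}_I)=\delta$, and by definition $\varphi_l(I)=\widetilde{M}_I/(t^{2\delta})$. Conversely, to any $M\in J_{C,p}$ I attach its full preimage $\widetilde{M}\subseteq \overline{R}$ under the projection $\overline{R}\to \overline{R}/(t^{2\delta})$ and set $I_M:=t^l\widetilde{M}$. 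Comparing $I\mapsto \widetilde{M}_I$ with $M\mapsto \widetilde{M}$ will yield injectivity, bijectivity for $l\ge c$, and closedness.

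For injectivity the key observation is that, since $(t^{2\delta})\subseteq \widetilde{M}_I$, the submodule $\widetilde{M}_I$ is precisely the preimage of its own image $\varphi_l(I)$ under $\overline{R}\to\overline{R}/(t^{2\delta})$, so nothing is lost by reducing modulo $(t^{2\delta})$. Hence $\widetilde{M}_I$, and therefore $I=t^l\widetilde{M}_I$, is recovered from $\varphi_l(I)$, and $\varphi_l(I)=\varphi_l(I')$ forces $I=I'$. This argument uses nothing about the size of $l$, so it is valid for every $l\ge 0$.

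For bijectivity when $l\ge c$ I would check that $I_M:=t^l\widetilde{M}$ is a genuine element of $C^{[l]}_p$ mapping to $M$. As $\widetilde{M}\subseteq\overline{R}$ we have $I_M\subseteq (t^l)$, and since $(t^c)$ is the conductor ideal with $(t^c)\subseteq R$ and $l\ge c$, we get $(t^l)\subseteq R$; thus $I_M$ is an $R$-submodule of $R$, i.e.\ an ideal. Multiplication by $t^l$ induces an isomorphism $\overline{R}/\widetilde{M}\cong (t^l)/I_M$, so together with $\dim_{\mathbb{C}}(\overline{R}/(t^l))=l$ and $\dim_{\mathbb{C}}(\overline{R}/\widetilde{M})=\delta$ one finds $\dim_{\mathbb{C}}(\overline{R}/I_M)=l+\delta$, whence $\dim_{\mathbb{C}}(R/I_M)=l$. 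Finally $\varphi_l(I_M)=\widetilde{M}/(t^{2\delta})=M$, so $\varphi_l$ is onto $J_{C,p}$ for $l\ge c$.

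The remaining, and in my view most delicate, point is Zariski-closedness of the image. The previous paragraphs identify the image exactly as the set of $M\in J_{C,p}$ for which the candidate $I_M=t^l\widetilde{M}$ lands in $R$, i.e.\ $\{M\in J_{C,p}\mid t^l\widetilde{M}\subseteq R\}$, so I must see that this membership condition is closed on the Grassmannian. I would linearize it. Put $V:=\overline{R}/(t^{2\delta})$, let $\overline{V}:=\overline{R}/(t^{l+2\delta})$ with projection $\pi\colon \overline{R}\to \overline{V}$, and let $\psi\colon V\to \overline{V}$ be induced by multiplication by $t^l$ (well defined since $t^l(t^{2\delta})=(t^{l+2\delta})$). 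Using $2\delta\ge c$, hence $(t^{l+2\delta})\subseteq (t^c)\subseteq R$, together with $(t^{l+2\delta})\subseteq t^l\widetilde{M}$, the condition $t^l\widetilde{M}\subseteq R$ holds if and only if the image of $t^l\widetilde{M}$ in $\overline{V}$, which is exactly $\psi(M)$, lies in $\pi(R)$. Setting $W:=\psi^{-1}(\pi(R))$, a fixed linear subspace of $V$, this collapses to the single incidence condition $M\subseteq W$. Therefore the image equals $J_{C,p}\cap \mathrm{Gr}(\delta,W)$, which is closed in $J_{C,p}$ because $\mathrm{Gr}(\delta,W)$ is closed in $\mathrm{Gr}(\delta,V)$. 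The main obstacle is precisely engineering this reduction—choosing the truncation level $l+2\delta$ so that the defining condition descends to $V$ and reduces to $M\subseteq W$; once that is arranged, the closedness is immediate.
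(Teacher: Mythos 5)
Your proposal is correct, but note that the paper itself offers no proof of this proposition: it is imported verbatim from Pfister--Steenbrink (\cite{PS}, Theorem 3), so there is nothing internal to compare against. Your argument is a sound self-contained reconstruction. The injectivity step (recovering $\widetilde{M}_I=t^{-l}I$ as the full preimage of $\varphi_l(I)$ because $(t^{2\delta})\subseteq \widetilde{M}_I$) and the surjectivity step for $l\ge c$ (checking that $I_M=t^l\widetilde{M}$ is an ideal of $R$ of colength $l$ via $\dim_{\mathbb{C}}\overline{R}/I_M=l+\delta$ and $\dim_{\mathbb{C}}\overline{R}/R=\delta$) are both complete. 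The closedness argument is the genuinely substantive part, and your identification of the image as $\{M\in J_{C,p}\mid t^l\widetilde{M}\subseteq R\}$ followed by the linearization $M\subseteq W$ with $W=\psi^{-1}(\pi(R))$ is exactly the right mechanism; the sub-Grassmannian condition $M\subseteq W$ is closed, so the conclusion follows. Two small points you lean on without proof, both standard but worth flagging since the paper never states them explicitly: that the conductor ideal $(t^c)$ is contained in $R$ (this is what makes $(t^l)\subseteq R$ for $l\ge c$ and hence drives surjectivity), and the inequality $c\le 2\delta$ (which guarantees $(t^{l+2\delta})\subseteq R$ and makes your truncation level $l+2\delta$ work). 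The latter follows from the symmetry argument on gaps of $\Gamma$ in $[0,c-1]$ and holds for every numerical semigroup, so no hypothesis beyond irreducibility is needed. With those two facts granted, your proof is complete and, as far as one can tell, follows the same lines as the original argument in \cite{PS}.
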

Since $\varphi_l$ is injective, we always identify $C^{[l]}_p$ with its image by $\varphi_l$. 
We infer the following fact from Proposition\,\ref{thm3}. 

\begin{cor}\label{stable}
For $l\ge c$,  we have $[J_{C,p}]=[C^{[l]}_p]$ in $K_0(\mathrm{Var}_{\mathbb{C}})$. 
\end{cor}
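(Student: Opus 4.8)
The plan is to read off the equality directly from Proposition \ref{thm3} together with the scissor relation (\ref{G ring property 1}). The essential point is that for $l \ge c$ the $\delta$-normalized embedding $\varphi_l$ identifies $C^{[l]}_p$ with the \emph{whole} Jacobian factor $J_{C,p}$, so the two classes cannot differ and nothing needs to be computed.

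First I would recall the content of Proposition \ref{thm3}: for every $l$ the morphism $\varphi_l \colon C^{[l]}_p \to J_{C,p}$ is injective with Zariski-closed image, so after the identification of $C^{[l]}_p$ with $\varphi_l(C^{[l]}_p)$ already fixed in the text we may regard $C^{[l]}_p$ as a closed subvariety of $J_{C,p}$. This realizes the inclusion $\varphi_l(C^{[l]}_p) \hookrightarrow J_{C,p}$ as a closed embedding, which is exactly the input needed to invoke (\ref{G ring property 1}).

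Next, for $l \ge c$ Proposition \ref{thm3} asserts in addition that $\varphi_l$ is bijective; hence the closed subvariety $\varphi_l(C^{[l]}_p)$ is set-theoretically all of $J_{C,p}$, and its complement $J_{C,p} \setminus \varphi_l(C^{[l]}_p)$ is empty. Applying the scissor relation to the closed embedding above then gives
$$[J_{C,p}] = [\varphi_l(C^{[l]}_p)] + [J_{C,p} \setminus \varphi_l(C^{[l]}_p)] = [C^{[l]}_p] + [\emptyset] = [C^{[l]}_p],$$
since the empty variety has class $0$ in $K_0(\mathrm{Var}_{\mathbb{C}})$. This is the desired identity.

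The delicate point, and the only place where care is needed, is the identification of $C^{[l]}_p$ with its image at the level of classes: equality in the Grothendieck ring requires $\varphi_l$ to be an isomorphism onto $\varphi_l(C^{[l]}_p)$, not merely a bijective morphism, because a bijective morphism of $\mathbb{C}$-varieties need not be an isomorphism. I would therefore make explicit that $\varphi_l$ is a closed immersion in the sense of Pfister--Steenbrink, so that the identification used throughout the text is genuinely an isomorphism of varieties; alternatively, one may note that over $\mathbb{C}$ a bijective morphism is an isomorphism over a dense open subset and conclude $[C^{[l]}_p] = [\varphi_l(C^{[l]}_p)]$ by Noetherian induction on the closed complement. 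Either route closes the only gap, after which the scissor computation is immediate.
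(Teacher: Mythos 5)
Your argument is correct and is essentially the paper's own: the corollary is stated there as a direct inference from Proposition \ref{thm3}, with no further proof given, and the reasoning you supply (bijectivity of $\varphi_l$ for $l\ge c$ onto the Zariski-closed image, hence onto all of $J_{C,p}$, so the classes agree) is exactly the intended one. Your additional care about the distinction between a bijective morphism and an isomorphism---resolved either by noting that $\varphi_l$ is a closed immersion or by Noetherian induction over $\mathbb{C}$---addresses a point the paper passes over silently, and is a legitimate tightening rather than a departure.
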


Let $\Gamma$ be the semi-group of $R$.  
A subset $\Delta\subset \mathbb{N}_0$ is called a \emph{$\Gamma$-semi-module}, if the relation $\Delta+\Gamma\subseteq \Delta$ holds.   
It is obvious that, for any ideal $I$ of $R$, the set $\Gamma(I):=\{\text{ord}_t(f)|\,f\in I\}$ is a $\Gamma$-semi-module.
For a $\Gamma$-semi-module $\Delta$, the set
$
\mathcal{I}(\Delta):=\{I\subset R\,|\,\text{$I$ is an ideal of $R$ with $\Gamma(I)=\Delta$}\}
$
is called the \emph{$\Delta$-subset}. 
Put
$
\mathrm{Mod}_l(\Gamma):=\{\Delta|\, \text{$\Delta$ is a $\Gamma$-semi-module with codim $\Delta=l$}\}. 
$
It is known that a punctual Hilbert scheme $C^{[l]}_p$ for irreducible curve singularity is stratified by $\Delta$-subsets. 

\begin{prp}[\cite{SW1}, Proposition\,6]\label{decomposition lem}
Let $(C,p)$ be a reduced irreducible curve singularity.
If $\mathrm{Mod}_l(\Gamma)=\{\Delta_{l,1},\ldots,\Delta_{l,s_l}\}$, then a punctual Hilbert scheme $C^{[l]}_p$ is decomposed as
\begin{equation}\label{decomposition}
C^{[l]}_p=\bigsqcup_{i=1}^{s_l} \mathcal{I}(\Delta_{l,i}).
\end{equation}
\end{prp}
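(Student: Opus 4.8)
The plan is to realize \eqref{decomposition} as the partition of $C^{[l]}_p$ into the fibers of the map sending an ideal to its value semi-module, so that the entire content collapses to a single dimension count. Recall that under the identification preceding Proposition\,\ref{thm3} a point of $C^{[l]}_p$ is an ideal $I\subset R$ with $\dim_{\mathbb{C}}(R/I)=l$, and that $\Gamma(I):=\{\mathrm{ord}_t(f)\,|\,f\in I\}$ has already been observed to be a $\Gamma$-semi-module. Hence there is a well-defined map $\pi\colon C^{[l]}_p\to \bigsqcup_m\mathrm{Mod}_m(\Gamma)$, $\pi(I)=\Gamma(I)$, whose fiber over a semi-module $\Delta$ is by definition the $\Delta$-subset $\mathcal{I}(\Delta)$. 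Since distinct semi-modules yield disjoint $\Delta$-subsets and every ideal has exactly one value semi-module, these fibers automatically partition $C^{[l]}_p$; the only genuine point to prove is that $\pi$ lands in $\mathrm{Mod}_l(\Gamma)$, i.e.\ that the target codimension equals the source colength.

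Accordingly, the key step I would establish is the identity
\begin{equation*}
\mathrm{codim}\bigl(\Gamma(I)\bigr)=\dim_{\mathbb{C}}(R/I).
\end{equation*}
To prove it I introduce the order filtration $R_{\ge n}:=\{f\in R\,|\,\mathrm{ord}_t(f)\ge n\}$ and $I_{\ge n}:=I\cap R_{\ge n}$. Because $R$ is a one-dimensional domain with normalization $\overline{R}\cong k[[t]]$, this filtration is separated and exhaustive ($\bigcap_n R_{\ge n}=0$), the graded piece $R_{\ge n}/R_{\ge n+1}$ is $1$-dimensional exactly when $n\in\Gamma$ and $0$ otherwise, and the analogous statement for $I$ with $\Delta:=\Gamma(I)$ holds by the very definition of the value semi-module. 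Passing to the induced filtration on $R/I$ and taking associated graded then identifies $\dim_{\mathbb{C}}(R/I)$ with $\dim_{\mathbb{C}}(\mathrm{gr}\,R/\mathrm{gr}\,I)=\sharp\{\Gamma\setminus\Delta\}=\mathrm{codim}(\Delta)$.

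With this identity in hand the proof concludes quickly: for $I\in C^{[l]}_p$ one gets $\mathrm{codim}(\Gamma(I))=l$, so $\Gamma(I)\in\mathrm{Mod}_l(\Gamma)=\{\Delta_{l,1},\ldots,\Delta_{l,s_l}\}$, while conversely any $I\in\mathcal{I}(\Delta_{l,i})$ satisfies $\dim_{\mathbb{C}}(R/I)=\mathrm{codim}(\Delta_{l,i})=l$ and hence lies in $C^{[l]}_p$. Combining these two inclusions with the fiber description of the $\mathcal{I}(\Delta_{l,i})$ gives the disjoint decomposition \eqref{decomposition}.

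I expect the main obstacle to be the rigorous justification of the associated-graded step, namely verifying that the colength $\dim_{\mathbb{C}}(R/I)$ is genuinely computed by summing the one-dimensional graded quotients indexed by $\Gamma\setminus\Delta$. This requires checking that the order filtrations on $R$ and on $I$ are compatible and that only finitely many graded pieces contribute; here the finiteness $\delta<\infty$ and the stabilization $(t^{l+2\delta})\cap R\subseteq I$ recorded in the preliminaries guarantee that $R_{\ge n}\subseteq I$ for $n\gg0$, so the filtration on $R/I$ is finite and the dimension of a filtered space equals that of its associated graded.
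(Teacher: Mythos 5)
The paper does not prove this proposition; it is quoted verbatim from \cite{SW1} (Proposition\,6), so there is no internal proof to compare against. Your argument is correct and is the standard one: the decomposition is exactly the partition of $C^{[l]}_p$ into fibers of $I\mapsto\Gamma(I)$, and the one substantive point, $\dim_{\mathbb{C}}(R/I)=\sharp\{\Gamma\setminus\Gamma(I)\}$, follows from your associated-graded computation for the order filtration (the graded piece of $R/I$ in degree $n$ is one-dimensional precisely for $n\in\Gamma\setminus\Gamma(I)$ and zero otherwise, the filtration being finite because a nonzero ideal of the one-dimensional complete local domain $R$ has finite colength).
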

\begin{rem}\label{rem 10}
The stratification\,{\rm (\ref{decomposition})} is an affine cell decomposition for the Jacobian factors $($i.e. $C^{[l]}_p$ with $l\ge c$$)$ of reduced  irreducible plane curve singularities with the characteristic exponents $(p,q)$, $(4,2q,s)$, $(6,8,s)$ and $(6,10,s)$ $($see {\rm \cite{P}}$)$. 
This fact was generalized to any punctual Hilbert schemes of reduced irreducible plane curve singularities with the characteristic exponents $(p,q)$ in {\rm \cite{ORS}}.
\end{rem}
\subsection{Proof of Theorem\,\ref{main 4}}\label{proof 2}
In this subsection, we give the proof of Theorem\,\ref{main 4}. 
Below, we freely use the notations introduced in the previous sections.

\noindent
\textit{Proof of Theorem\,$\ref{main 4}$.} 
Note that the condition\,$(*)$ implies each $\Delta$-subset in (\ref{decomposition}) is isomorphic to an affine space. 
So the number of affine cells of $C^{[l]}$ is equals to $\sharp \mathrm{Mod}_l (\Gamma)$. 
Furthermore, we have $\sharp \mathrm{Mod}_l (\Gamma)=\chi (C_p^{l})$ by Lemma\,\ref{main thm 1}. 
It follows from Theorem\,\ref{main 2} that
\begin{align*}
I(\Gamma ;q)=\sum_{l=0}^{\infty}\sum_{\Delta\in \mathrm{Mod}_l(\Gamma)}q^{\mathrm{codim}(\Delta)}
=\sum_{l=0}^{\infty} \sharp \mathrm{Mod}_l (\Gamma)  q^{l}
=\sum_{l=0}^{\infty} \chi({C^{[l]}})   q^{l}
=Z^{\mathrm{Hilb}}_{C,p}(q, 1).
\end{align*}
$\square$

Bejleri, Rangnathan and Vakil \cite{BRV} proved that the motivic Hilbert function for a reduced curve singularity is rational.

\begin{thm}[\cite{BRV}, Theorem 2.3]\label{thm BRV}
Let  $(C,p)$ be a reduced curve singularity $($which may not satisfy the condition\,$(*))$.
The motivic Hilbert zeta function  $Z^{\mathrm{Hilb}}_{C,p}(t)$  is of the form
\begin{equation}\label{zeta}
Z^{\mathrm{Hilb}}_{C,p}(t)=\frac{f(t)}{(1-t)^b}
\end{equation}
where  $f(t)$ is a polynomial in $1+tK_0(\mathrm{Var}_{\mathbb{C}})$ and $b$ is the number of branches at $p$.
\end{thm}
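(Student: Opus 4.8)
The plan is to compute $Z^{\mathrm{Hilb}}_{C,p}(t)$ by fibering the punctual Hilbert scheme over the normalization data of each ideal, and to show that this data stabilizes independently in each of the $b$ branch directions, each contributing one factor $1/(1-t)$. As a consistency check in the irreducible case $b=1$, the stabilization is exactly Corollary~\ref{stable}: since $[C^{[l]}_p]=[J_{C,p}]$ for all $l\ge c$, one gets
\begin{equation*}
Z^{\mathrm{Hilb}}_{C,p}(t)=\sum_{l=0}^{c-1}[C^{[l]}_p]\,t^{l}+[J_{C,p}]\,\frac{t^{c}}{1-t},
\end{equation*}
which has the required shape with $b=1$. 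The task for general $b$ is to produce the analogous stabilization in all branches simultaneously.

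Write the normalization as $\overline R\cong\prod_{i=1}^{b}k[[t_i]]$, let $c_i$ be the conductor exponent in the $i$-th branch, $C=\sum_i c_i$, and let $\mathcal C\subset R$ be the conductor, the largest $\overline R$-ideal inside $R$. To each finite-colength ideal $I$ attach the fractional $\overline R$-ideal $\overline I:=I\overline R=\prod_i(t_i^{a_i})$, recorded by a lattice point $a=(a_1,\dots,a_b)\in\mathbb Z_{\ge 0}^{b}$. Because $\mathcal C\subseteq R$ is an $\overline R$-ideal one has the double inclusion $\mathcal C\overline I\subseteq I\subseteq\overline I$, so $I$ is recovered from $a$ together with the $R$-submodule $N:=u^{-1}I$, which satisfies $\mathcal C\subseteq N\subseteq\overline R$, where $u=\prod_i t_i^{a_i}$ and multiplication by $u^{-1}$ is an $R$-linear isomorphism $\overline I/\mathcal C\overline I\xrightarrow{\ \sim\ }\overline R/\mathcal C$. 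The requirements that $I$ be an honest ideal of $R$ with $I\overline R=\overline I$ translate into: $N$ generates the unit ideal over $\overline R$, and $N\subseteq u^{-1}R\cap\overline R$.

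The decisive point is that for $a$ in the stable range $a_i\ge c_i$ (all $i$) one has $u\in\mathcal C$, hence $u\overline R\subseteq R$ and $u^{-1}R\cap\overline R=\overline R$; the fiber is then a \emph{fixed} parameter space $F$ of $R$-submodules $\mathcal C\subseteq N\subseteq\overline R$ with $\overline R N=\overline R$, independent of $a$. A direct length count gives $\dim_{\mathbb C}(R/I)=\sum_i a_i+d(N)-\delta$ with $d(N):=\dim_{\mathbb C}(\overline R/N)$, so, stratifying $F$ by $d$ and setting $g(t):=\sum_{d}[F_d]\,t^{d}\in K_0(\mathrm{Var}_{\mathbb C})[t]$ with $F_d=\{N\in F: d(N)=d\}$, the stable range contributes
\begin{equation*}
g(t)\,t^{-\delta}\prod_{i=1}^{b}\sum_{a_i\ge c_i}t^{a_i}
=g(t)\,\frac{t^{\,C-\delta}}{(1-t)^{b}}.
\end{equation*}
The remaining lattice points, where some $a_i<c_i$, form finitely many slabs on each of which the fiber is eventually constant in the unbounded coordinate directions, so the same bookkeeping gives a rational contribution whose denominator divides $(1-t)^{b-1}$. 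Adding the finitely many pieces yields $Z^{\mathrm{Hilb}}_{C,p}(t)=f(t)/(1-t)^{b}$ with $f\in 1+tK_0(\mathrm{Var}_{\mathbb C})$.

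The main obstacle is to run this fiber argument \emph{motivically} rather than numerically: one must realize $F$ and the loci $F_d$ as constructible families inside the Grassmannian of $\overline R/\mathcal C$ with well-defined classes, check that the conditions $\overline R N=\overline R$ and $N\subseteq u^{-1}R$ are constructible and give an actual affine-bundle (or at least class-multiplicative) fibration of the corresponding strata of $C^{[l]}_p$, and control the boundary slabs uniformly. This geometric input — that the fiber genuinely stabilizes as a variety, and that the number of escaping uniformizer directions is exactly the number of branches $b$ — is what upgrades the elementary lattice bookkeeping above into the stated rationality with denominator $(1-t)^{b}$.
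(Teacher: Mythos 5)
A preliminary remark on the comparison itself: the paper does not prove Theorem~\ref{thm BRV}; it is imported verbatim from \cite{BRV} (their Theorem~2.3), and Remark~\ref{main} only records that the proof there works by generalizing the stability of $[C^{[l]}_p]$ in Corollary~\ref{stable} to reducible singularities. Your sketch is a reconstruction of exactly that strategy (stratify by the multi-order $a\in\mathbb{Z}_{\ge 0}^{b}$ of $I\overline{R}$, show the fibers stabilize once each $a_i$ passes the branchwise conductor, and sum $b$ geometric series), so in outline it is consistent with the cited argument, and the skeleton is sound: the double inclusion $\mathcal{C}\overline{I}\subseteq I\subseteq\overline{I}$, the length count $\dim_{\mathbb C}(R/I)=\sum_i a_i+d(N)-\delta$, and the observation that for $a_i\ge c_i$ the constraint $uN\subseteq R$ becomes vacuous are all correct.

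As a proof, however, it has a genuine gap, and you name it yourself in the final paragraph. The entire content of the theorem is the stabilization you defer: (1) on a boundary slab where $a_j<c_j$ only for $j$ in a proper subset $S$, the claim that the fiber ``is eventually constant in the unbounded coordinate directions'' is not argued. It is true --- once $a_i\ge c_i$ for $i\notin S$, the components of $uN$ in those branches already lie in the conductor, so membership of $uN$ in $R$ is a condition on the $S$-components alone, read off in $\overline{R}/\mathcal{C}$ --- but this is precisely the lemma that must be stated and proved, and one must also handle the inclusion--exclusion over which coordinates sit below their thresholds before concluding that each such slab contributes a denominator dividing $(1-t)^{b-|S|}$. (2) The motivic upgrade is entirely missing: you need the multi-order map to stratify $C^{[l]}_p$ into locally closed subvarieties so that classes add in $K_0(\mathrm{Var}_{\mathbb C})$, you need $F$ and the loci $F_d$ to be constructible subsets of a fixed Grassmannian of $\overline{R}/\mathcal{C}$, and you need $I\mapsto u^{-1}I$ to induce an isomorphism of varieties (not merely a bijection of sets) between each stratum and the corresponding $F_d$. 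A writeup that ends by labelling these verifications ``the main obstacle'' has identified the right plan but has not proved the theorem; to complete it you would either carry out these checks or, as the paper does, simply cite \cite{BRV}.
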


\begin{rem}\label{main}
Generalizing the stability of $[C_p^{[l]}]$ in Corollary\,{\rm \ref{stable}} to the case of reducible curve singularities, Theprem\,{\rm \ref{thm BRV}} was proved in {\rm  \cite{BRV}}. 
\end{rem}

In our situations, Theorem\,\ref{thm BRV} is stated as follows:

\begin{cor}\label{cor of thm BRV}
In Theorem\,{\rm\ref{thm BRV}},  if a reduced irreducible curve singularity $(C,p)$ whose  semi-group is $\Gamma$ satisfies  the condition $(*)$, then the  polynomial $f(t)$ in {\rm (\ref{zeta}) } has the follwoing properties$:$
$({\rm i}):f(t)\in 1+\mathbb{C}[\mathbb{L}][t]$, $({\rm ii}):\deg_tf(t)=c$. 
Furthermore, if $(C,p)$ is planar , then the condition\,$({\rm ii})$ becomes $\deg_tf(t)=2\delta=\mu$ where $\mu$ is  the Milnor number of $(C,p)$.
\end{cor}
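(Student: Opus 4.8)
The plan is to make Theorem~\ref{thm BRV} explicit using the stabilization of the coefficients. Since $(C,p)$ is irreducible we have $b=1$, so $f(t)=(1-t)\,Z^{\mathrm{Hilb}}_{C,p}(t,\mathbb{L})$. By Lemma~\ref{main thm 1} every coefficient $[C^{[l]}_p]$ lies in $\mathbb{C}[\mathbb{L}]$, and by Corollary~\ref{stable} one has $[C^{[l]}_p]=[J_{C,p}]$ for all $l\ge c$. Writing the series as $\sum_{l=0}^{c-1}[C^{[l]}_p]t^{l}+[J_{C,p}]\,t^{c}/(1-t)$ and multiplying by $(1-t)$ gives
\[
f(t)=(1-t)\sum_{l=0}^{c-1}[C^{[l]}_p]\,t^{l}+[J_{C,p}]\,t^{c},
\]
a polynomial in $t$ with coefficients in $\mathbb{C}[\mathbb{L}]$ and constant term $[C^{[0]}_p]=1$. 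This establishes property~$(\mathrm{i})$ and shows $\deg_t f(t)\le c$.

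For $(\mathrm{ii})$ it remains to prove $\deg_t f(t)\ge c$, i.e. that the top coefficient $[J_{C,p}]-[C^{[c-1]}_p]\in\mathbb{C}[\mathbb{L}]$ does not vanish. I would detect this after specializing $\mathbb{L}=1$: by Proposition~\ref{prp 1} the value of this coefficient at $\mathbb{L}=1$ is $\chi(J_{C,p})-\chi(C^{[c-1]}_p)$, and under $(*)$ Proposition~\ref{decomposition lem} together with Lemma~\ref{main thm 1} identifies $\chi(C^{[l]}_p)=\sharp\,\mathrm{Mod}_l(\Gamma)$, while Corollary~\ref{stable} gives $\chi(J_{C,p})=\sharp\,\mathrm{Mod}_c(\Gamma)$. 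Thus the whole question reduces to the purely combinatorial strict inequality
\[
\sharp\,\mathrm{Mod}_{c-1}(\Gamma)<\sharp\,\mathrm{Mod}_{c}(\Gamma).
\]

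To prove this I would exhibit an injection $\mathrm{Mod}_{c-1}(\Gamma)\hookrightarrow\mathrm{Mod}_{c}(\Gamma)$ that is not onto. The natural candidate is the shift $\Delta\mapsto 1+\Delta$, which on the level of ideals is $I\mapsto tI$ and which is exactly the normalized embedding $\varphi_{c-1}$ read inside $J_{C,p}=C^{[c]}_p$. It is manifestly injective, and its image misses the semi-module $c+\Gamma\in\mathrm{Mod}_c(\Gamma)$: if $c+\Gamma=1+\Delta$ then $\Delta=(c-1)+\Gamma\ni c-1$, which is impossible since $\Delta\subseteq\Gamma$ and $c-1\notin\Gamma$ by definition of the conductor. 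Geometrically this is the statement that the stratum $\mathcal{I}(c+\Gamma)$—the ideals generated in order $c$, represented by $M_0:=R/(t^{2\delta})$—is disjoint from the image of $\varphi_{c-1}$, because $t^{-1}$ times such an ideal has a generator of order $c-1\notin\Gamma$ and hence leaves $R$. The point requiring the most care is to check that $\Delta\mapsto 1+\Delta$ actually lands in $\mathrm{Mod}_c(\Gamma)$, i.e. that $1+\Delta\subseteq\Gamma$ with codimension exactly $c$; this is where the stabilization underlying Corollary~\ref{stable} (equivalently the bijectivity of $\varphi_c$ in Proposition~\ref{thm3}) is used. Granting it, the inequality follows and $\deg_t f(t)=c$.

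Finally, for the planar refinement I would invoke two standard facts. An irreducible plane curve singularity is Gorenstein, so its value semi-group $\Gamma$ is symmetric, i.e. $x\in\Gamma\iff c-1-x\notin\Gamma$; this pairs the $\delta$ gaps in $[0,c-1]$ with the $\delta$ non-gaps there and forces $c=2\delta$. Together with Milnor's formula $\mu=2\delta-b+1$ and $b=1$ this yields $\deg_t f(t)=c=2\delta=\mu$, completing the corollary.
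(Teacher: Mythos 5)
Your proof is correct and follows essentially the same route as the paper: split the series at the conductor using Corollary~\ref{stable}, multiply by $(1-t)$ to get $f(t)=(1-t)\sum_{l=0}^{c-1}[C^{[l]}_p]t^l+[J_{C,p}]t^c$ with constant term $f^{(0)}_{C,p}=1$, reduce $(\mathrm{ii})$ to the non-vanishing of the top coefficient $[C^{[c]}_p]-[C^{[c-1]}_p]$, and settle the planar case by the standard facts $c=2\delta=\mu$. The one point where you go beyond the paper is that non-vanishing step: the paper simply asserts $C^{[c-1]}_p\subsetneq J\cong C^{[c]}_p$ and concludes, whereas you specialize at $\mathbb{L}=1$ and prove the strict inequality $\sharp\,\mathrm{Mod}_{c-1}(\Gamma)<\sharp\,\mathrm{Mod}_{c}(\Gamma)$ via the shift $\Delta\mapsto 1+\Delta$, whose image misses $c+\Gamma$ because $c-1\notin\Gamma$ --- a more explicit justification of the same assertion (and the well-definedness you flag does hold, since any $x$ in a semi-module of codimension $c-1$ satisfies $x\ge c-1$, so $1+\Delta\subseteq\Gamma$ with codimension exactly $c$).
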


\begin{proof}
It follows from Corollary\,\ref{stable} that
\begin{align*}
Z_{C,p}^{\mathrm{Hilb}}(t, \mathbb{L})
&=\sum_{l=0}^{c-1}f_{C,p}^{(l)}(\mathbb{L})t^l+\sum_{l\ge c}^{\infty}f_{C,p}^{(l)}(\mathbb{L})t^l
=\sum_{l=0}^{c-1}f_{C,p}^{(l)}(\mathbb{L})t^l+\frac{f_{C,p}^{(c)}(\mathbb{L})t^c}{1-t}\\
&=\frac{1}{1-t}\left\{f_{C,p}^{(0)}(\mathbb{L})+\sum_{l=1}^{c-1}(f_{C,p}^{(l)}(\mathbb{L})-f_{C,p}^{(l-1)}(\mathbb{L}))t^{l}+(f_{C,p}^{(c)}(\mathbb{L})-f_{C,p}^{(c-1)}(\mathbb{L}))t^c\right\}.
\end{align*}
Since $C_p^{[0]}=\{R\}$, we have $f_{C,p}^{(0)}=1$. Furthermore, we can show $C_p^{[c-1]}\subsetneq J\cong C_p^{[c]}$. 
Hence we conclude that $f_{C,p}^{(c)}(\mathbb{L})-f_{C,p}^{(c-1)}(\mathbb{L})\neq 0$. 
In particular, it is well known that $c=\mu$ holds for a plane curve singularity (cf. \cite{Ri}, Theorem\,1). 
\end{proof}
In Corollary\,\ref{cor of thm BRV}, we use the notation $f(t,\mathbb{L})$ instead of $f(t)$. 
It follows from Corollary\,\ref{cor of thm BRV} that $Z^{\mathrm{Hilb}}_{C,p}(t,\mathbb{L})=f(t,\mathbb{L})/(1-t)$.
The following fact is the direct consequence of Thorem\,\ref{main 4}:
\begin{thm}\label{semi-moudle generating function}
For  a numerical semi-group $\Gamma$, let $(C,p)$ be a reduced irreducible curve singularity whose local ring is $\mathbb{C}[[t^\gamma|\, \gamma\in \Gamma]]$.
If $(C,p)$ satisfy the condition\,$(*)$, then the $\Gamma$-semi-module generating function $I(\Gamma;q)$ is of the form
\begin{equation}
I(\Gamma;q)=\frac{f(q,1)}{(1-t)}
\end{equation}
where  $f(q,1)\in 1+\mathbb{Z}[q]$ and $\deg_q f(q,1)=c$. 
\end{thm}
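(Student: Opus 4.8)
The plan is to obtain the statement by specializing the rational expression for the motivic Hilbert zeta function at $\mathbb{L}=1$ and then reading off the numerator. First I would invoke Theorem~\ref{main 4}, which gives $I(\Gamma;q)=Z^{\mathrm{Hilb}}_{C,p}(q,1)$, so that the whole problem is transferred to the motivic side. Next I would apply Corollary~\ref{cor of thm BRV}: under the condition $(*)$ the numerator satisfies $f(t,\mathbb{L})\in 1+\mathbb{C}[\mathbb{L}][t]$, $\deg_t f(t,\mathbb{L})=c$, and $Z^{\mathrm{Hilb}}_{C,p}(t,\mathbb{L})=f(t,\mathbb{L})/(1-t)$. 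Substituting $\mathbb{L}=1$ and $t=q$ and combining with Theorem~\ref{main 4} yields $I(\Gamma;q)=f(q,1)/(1-q)$ at once (the denominator printed in the statement should read $(1-q)$ rather than $(1-t)$). The remaining work is purely to verify the two claimed properties of $f(q,1)$.

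For the integrality $f(q,1)\in 1+\mathbb{Z}[q]$, I would use the explicit form of the numerator extracted in the proof of Corollary~\ref{cor of thm BRV}, namely
$$
f(t,\mathbb{L})=f_{C,p}^{(0)}(\mathbb{L})+\sum_{l=1}^{c-1}\bigl(f_{C,p}^{(l)}(\mathbb{L})-f_{C,p}^{(l-1)}(\mathbb{L})\bigr)t^{l}+\bigl(f_{C,p}^{(c)}(\mathbb{L})-f_{C,p}^{(c-1)}(\mathbb{L})\bigr)t^{c}.
$$
By Proposition~\ref{prp 1} we have $f_{C,p}^{(l)}(1)=\chi(C^{[l]}_p)=s_l\in\mathbb{Z}_{\ge 0}$, the number of affine cells of $C^{[l]}_p$, so every coefficient of $f(q,1)$ is a difference $s_l-s_{l-1}$ of integers; since $C^{[0]}_p=\{R\}$ is a point, the constant term is $s_0=1$. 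Hence $f(q,1)=1+\sum_{l=1}^{c}(s_l-s_{l-1})q^l\in 1+q\mathbb{Z}[q]$.

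The only substantive point, and the one I expect to be the main obstacle, is the degree equality $\deg_q f(q,1)=c$. Specialization can only lower the degree, so I already have $\deg_q f(q,1)\le c$; equality holds precisely when the leading coefficient $f_{C,p}^{(c)}(1)-f_{C,p}^{(c-1)}(1)=s_c-s_{c-1}$ survives the substitution $\mathbb{L}=1$, i.e. is nonzero. The proof of Corollary~\ref{cor of thm BRV} only records that $f_{C,p}^{(c)}(\mathbb{L})-f_{C,p}^{(c-1)}(\mathbb{L})\neq 0$ as a polynomial in $\mathbb{L}$, which a priori could vanish at $\mathbb{L}=1$, so the crux is to rule this out. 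I would do so using the closed immersion $C^{[c-1]}_p\hookrightarrow J_{C,p}\cong C^{[c]}_p$ of Corollary~\ref{stable} together with \eqref{property of chi 1}, which give $s_c-s_{c-1}=\chi(C^{[c]}_p)-\chi(C^{[c-1]}_p)=\chi\bigl(C^{[c]}_p\setminus C^{[c-1]}_p\bigr)$. Since this inclusion is proper, the complement is a nonempty locally closed subvariety; under the condition $(*)$ the semi-module stratification of Proposition~\ref{decomposition lem} realizes it as a nonempty union of affine cells, whose Euler number is therefore a strictly positive integer. This forces $s_c-s_{c-1}>0$ and hence $\deg_q f(q,1)=c$, completing the plan. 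The delicate step to be checked carefully is precisely that the complement $C^{[c]}_p\setminus C^{[c-1]}_p$ is genuinely a subcomplex of the cell decomposition (equivalently that $\varphi_{c-1}(C^{[c-1]}_p)$ is a union of strata), so that its Euler number counts cells and is positive rather than merely nonzero.
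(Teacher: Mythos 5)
Your overall route is exactly the paper's: Theorem~\ref{main 4} transfers the statement to the motivic side, Corollary~\ref{cor of thm BRV} supplies the rational form $Z^{\mathrm{Hilb}}_{C,p}(t,\mathbb{L})=f(t,\mathbb{L})/(1-t)$, and one specializes $\mathbb{L}=1$ (and yes, the printed denominator $(1-t)$ is a typo for $(1-q)$). The integrality argument via $f^{(l)}_{C,p}(1)=s_l$ and $s_0=1$ is correct. You have also put your finger on the one point the paper passes over in silence: Corollary~\ref{cor of thm BRV} only gives $f^{(c)}_{C,p}(\mathbb{L})-f^{(c-1)}_{C,p}(\mathbb{L})\neq 0$ as a polynomial in $\mathbb{L}$, which does not by itself survive the substitution $\mathbb{L}=1$; the degree claim genuinely requires $s_c>s_{c-1}$, and isolating this as the crux is the right instinct.

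However, your proposed repair does not close the gap as stated, for the reason you yourself flag: nothing in the paper shows that $\varphi_{c-1}(C^{[c-1]}_p)$ is a union of strata of $C^{[c]}_p\cong J_{C,p}$. The image of a cell $\mathcal{I}(\Delta)\subseteq C^{[c-1]}_p$ lands inside the cell of $J_{C,p}$ indexed by the shifted semi-module, but need not fill it, and the Euler number of an affine cell minus a proper closed subvariety can be zero or negative, so ``nonempty complement'' alone is not enough. The clean way to finish bypasses the geometry entirely: under $(*)$ one has $s_l=\#\mathrm{Mod}_l(\Gamma)$ (this is the content of the proof of Theorem~\ref{main 4}), so it suffices to show $\#\mathrm{Mod}_c(\Gamma)>\#\mathrm{Mod}_{c-1}(\Gamma)$. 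Any $\Delta\in\mathrm{Mod}_l(\Gamma)$ satisfies $\min\Delta\ge l$, because $\min\Delta+\Gamma\subseteq\Delta$ and $\#\bigl(\Gamma\setminus(m+\Gamma)\bigr)=m$ for $m\in\Gamma$. Hence the map $\Delta\mapsto\Delta+1$ sends $\mathrm{Mod}_{c-1}(\Gamma)$ injectively into $\mathrm{Mod}_c(\Gamma)$: one has $\Delta+1\subseteq\{c,c+1,\dots\}\subseteq\Gamma$ and the codimension increases by exactly one. It is not surjective, since $c+\Gamma\in\mathrm{Mod}_c(\Gamma)$ cannot equal $\Delta+1$ for any $\Delta\subseteq\Gamma$ (that would force $c-1\in\Delta\subseteq\Gamma$). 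Therefore $s_c\ge s_{c-1}+1$ and $\deg_q f(q,1)=c$. With this substitution your argument is complete, and it also makes precise the paper's own remark that the theorem admits a direct proof valid for any numerical semigroup.
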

\begin{rem}
Theorem\,{\rm \ref{semi-moudle generating function}} was also argued by Chavan {\rm \cite{C}}. 
We can prove Theorem\,{\rm \ref{semi-moudle generating function}} directly, without going through a reduced irreducible curve singularity which satisfies the condition\,$(*)$. 
Theorem\,{\rm \ref{semi-moudle generating function}} holds for any numerical semi-group.  
\end{rem}

\section{Examples of motivic Hilbert zeta functions}\label{examples}
We consider some examples of motivic Hilbert zeta functions in this section. 
These examples will be used in a generalization of Kawai's result in the next section. 
\subsection{The \mathversion{bold}{$A_1$} and \mathversion{bold}{$A_{2d}$}-singularities}
Here we consider the motivic Hilbert zeta functions for the curve singularities of the $A_1$ and $A_{2d}$-types as examples.  
It is known that these singularities satisfy the condition $(*)$. 
\\
\textbf{The \mathversion{bold}$A_{1}$-singularity:}
This example can be found in \cite{Bejleri}.
Let $(C, p)$ be the $A_1$ singularity (i.e. an ordinary node).
We have $R=\mathbb{C}[[x,y]]/(xy)$. 
In this case, ideals in $R$ were studied in \cite{R}. 
It is easy to see that $C^{[0]}_p=\{R\}$ and $C^{[1]}_p=\{\mathfrak{m}\}$ where $\mathfrak{m}$ is the maximal ideal of $R$. 
For $l\ge 2$, the ideals in $C^{[l]}_p$ are given by
\begin{align*}
I^{l}_i(u_i)&=(y^i+u_ix^{l-i})\quad (u_i\in\mathbb{C}^{\times},\ i=1,\ldots, l-1),\\
Q^{l}_i&=(x^{l-i+1}, y^i)\quad (i=1,\dots, l)
\end{align*}
with the relations $\lim_{u_i\rightarrow 0}I^{l}_{i}(u_i)=Q^{l}_i$ and  $\lim_{u_i\rightarrow \infty}I^{l}_{i}(u_i)=Q^{l}_{i+1}$. 
Hence $C^{[l]}_p$ with $l\ge 2$ is a chain of $l-1$ rational curves. 
It follows that $[C^{[l]}_p]=(l-1)\mathbb{L}+1$. 
Putting these facts together, we finally obtain
\begin{equation}\label{MHZ for A_1}
Z_{C,p}^{\mathrm{Hilb}}(t, \mathbb{L})=\frac{1-t+\mathbb{L}t^2}{(1-t)^2}. 
\end{equation}

\noindent
\textbf{The \mathversion{bold}$A_{2d}$-singularity:}
We consider the $A_{2d}$-singularity where $d$ is a positive integer
(i.e. the plane curve singularity with $R=k[[t^2,t^{2d+1}]]$). 
For this singularity, we have $\Gamma=\langle 2, 2d+1\rangle$, $\delta=d$ and $c=2d$. 
To emphasis $d$, we use the notation $J^{(d)}$ for the Jacobian factor of the $A_{2d}$-singularity.
 We studied the properties of punctual Hilbert schemes of the $A_{2d}$-singularity in \cite{SW2}.

\begin{thm}[\cite{SW2}, Theorem\, 1]\label{main thm 1 in SW}
Let $d$ and $l$ be two integers with $0\le l\le 2d$. 
For the greatest integer $s$ which is not greater than $l/2$, the punctual Hilbert scheme $C_p^{[l]}$ is a rational projective variety of dimension $s$.
If $l=0, 1$, then it consists of one point. 
On the other hand, if $l\ge2$, then it is isomorphic to $J^{(s)}$. 
\end{thm}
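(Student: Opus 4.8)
The plan is to realise $C^{[l]}_p$ inside the Jacobian factor $J^{(d)}$ through the $\delta$-normalized embedding $\varphi_l$ of Proposition~\ref{thm3}, to cut out its image by a single explicit linear condition, and then to identify the resulting space of invariant subspaces with $J^{(s)}$ by an annihilator duality for nilpotent modules. First I would dispose of the trivial cases: for $l=0,1$ the only ideals of colength $l$ are $R$ and $\mathfrak{m}$, so $C^{[l]}_p$ is a reduced point (and $s=0$). From now on assume $l\ge 2$, hence $s\ge 1$.

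Write $V:=\overline{R}/(t^{2d})$ (here $\delta=d$), with basis given by the images of $1,t,\dots,t^{2d-1}$, and let $N\colon V\to V$ be the nilpotent operator induced by multiplication by $t^2$. Since $t^{2d+1}$ acts as $0$ on $V$, being an $R$-submodule is the same as being $N$-stable, so $J^{(d)}=\{M\in\mathrm{Gr}(d,V):NM\subseteq M\}$ and $N$ has Jordan type $(d,d)$ (the even and odd chains). By Proposition~\ref{thm3}, $\varphi_l$ identifies $C^{[l]}_p$ with a Zariski-closed subset of $J^{(d)}$. The key computation is to show that $M=\varphi_l(I)=t^{-l}I/(t^{2d})$ lies in the subspace $W_l:=\mathrm{span}_{\mathbb{C}}\{\,t^j\bmod(t^{2d}):0\le j<2d,\ j+l\in\Gamma\,\}$, and conversely that every $N$-invariant $d$-dimensional $M\subseteq W_l$ equals $\varphi_l(I)$ for a unique colength-$l$ ideal $I$. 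The forward direction is exactly the requirement $t^l M\subseteq R$; the reverse direction sets $I:=t^l\widetilde{M}$ (for $\widetilde{M}$ the preimage of $M$), which lies in $R$ precisely because $M\subseteq W_l$, is an ideal because $\widetilde{M}$ is stable under $t^2$ and $t^{2d+1}$ and contains $(t^{2d})$, and has colength $l$ by the dimension bookkeeping of Proposition~\ref{thm3}. Thus $\varphi_l$ identifies $C^{[l]}_p$ with $\{M\in\mathrm{Gr}(d,W_l):NM\subseteq M\}$, and a direct index count shows $W_l$ is $N$-stable of dimension $d+s$ and of Jordan type $(d,s)$ as a $\mathbb{C}[N]$-module.

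It then remains to prove that for a nilpotent module $W$ of type $(d,s)$ one has $\{M\in\mathrm{Gr}(d,W):NM\subseteq M\}\cong J^{(s)}$, which I would do in two steps. First, the annihilator map $M\mapsto M^{\perp}\subseteq W^{\vee}$ is an isomorphism of Grassmannians restricting to an isomorphism between $N$-invariant $d$-dimensional subspaces of $W$ and $N^{\vee}$-invariant $s$-dimensional subspaces of $W^{\vee}$; since Jordan type is self-dual, $W^{\vee}\cong W$ as $\mathbb{C}[N]$-modules, so this space is isomorphic to $\{M\in\mathrm{Gr}(s,W):NM\subseteq M\}$. Second, a socle-proximity argument shows that any $N$-invariant subspace of dimension $s$ is forced into the unique type-$(s,s)$ submodule (the socle ends of the two chains): if an element had a nonzero component on the $i$-th vector of the length-$d$ chain with $i\le d-s$, its $N$-orbit would already give more than $s$ independent vectors. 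Hence this space equals the set of $s$-dimensional $N$-invariant subspaces of a type-$(s,s)$ module, which is exactly $J^{(s)}$ by Corollary~\ref{stable} applied to $A_{2s}$ at $l=c=2s$. Combining, $C^{[l]}_p\cong J^{(s)}$.

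Finally, the geometric assertions transfer through this isomorphism: $J^{(s)}$ is projective, being closed in $\mathrm{Gr}(s,\overline{R}/(t^{2s}))$; it is irreducible of dimension $\delta=s$ by Rego's theorem \cite{Re} on the Jacobian factor of an irreducible planar singularity; and it is rational because its unique top stratum (the principal, i.e.\ free rank-one, submodules) is an affine cell $\mathbb{A}^{s}$, dense by the $\Delta$-stratification of Proposition~\ref{decomposition lem}. I expect the main obstacle to be the image characterization, namely verifying that the image of $\varphi_l$ is cut out precisely by the single condition $M\subseteq W_l$ (both inclusions together with the colength count), and checking that the annihilator duality is an isomorphism of schemes rather than merely a bijection on points; once these are secured, $C^{[l]}_p\cong J^{(s)}$ and all stated properties follow formally.
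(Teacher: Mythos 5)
The paper does not actually prove this statement: it is quoted from \cite{SW2} (Theorem 1 there) and used as a black box, so there is no internal proof to compare against. Your argument is, as far as I can check, a correct and essentially self-contained substitute, and the route is sound. The image characterization is right: for an ideal $I$ of colength $l$ every element has order at least $l$ (because $\#\bigl(\Gamma\setminus(m+\Gamma)\bigr)=m$ for the minimal order $m$ of $I$ --- worth stating, since it also gives the colength bookkeeping in the reverse direction), and $t^{l}\widetilde{M}\subseteq R$ is exactly the condition $M\subseteq W_l$; the parity split $l=2s$ versus $l=2s+1$ gives $W_l$ of Jordan type $(d,s)$ in both cases, which is consistent with the paper's observation in Section~3.1 that $C^{[2s]}_p$ and $C^{[2s+1]}_p$ coincide. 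The annihilator duality and the socle-proximity lemma correctly collapse the problem to $s$-dimensional $N$-invariant subspaces of a type-$(s,s)$ module, and that locus is $J^{(s)}$ simply by the definition of the Jacobian factor of $A_{2s}$ together with the fact that $t^{2s+1}$ annihilates $\overline{R}/(t^{2s})$; your citation of Corollary~\ref{stable} at that point is not the relevant justification and should be replaced by this remark. Two further small points: density of the big cell $\mathbb{A}^{s}$ of free rank-one submodules (needed for rationality) does not follow from the stratification of Proposition~\ref{decomposition lem} alone, but you correctly close the gap by invoking Rego's irreducibility of $J^{(s)}$ in dimension $\delta=s$; and the scheme-theoretic (rather than set-theoretic) nature of the annihilator correspondence, which you flag yourself, is standard for the Grassmannian duality and harmless here since only the variety structure is asserted. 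What your approach buys over the paper's is a transparent explanation of the two structural phenomena the paper only records: the two-step periodicity $C^{[2s]}_p\cong C^{[2s+1]}_p$ and the appearance of the smaller Jacobian factor $J^{(s)}$, both of which fall out of the Jordan type $(d,s)$ of $W_l$ and the duality $d\leftrightarrow s$.
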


By Theorem\,\ref{main thm 1 in SW}, it is enough to consider the Jacob factors to analyze the structures of punctual Hilbert schemes for this case. 
In \cite{SW2}, we showed that the stratification\,(\ref{decomposition}) of $J^{(d)}$ is the following  form:
\begin{equation}\label{Structure of Hilb}
J^{(d)}=\bigsqcup^d_{i=0}\mathbb{A}^i.
\end{equation}

It follows from Theorem\,\ref{main thm 1 in SW} and  (\ref{Structure of Hilb}) that
\begin{align}\notag
Z_{C,p}^{\mathrm{Hilb}}(t,\mathbb{L})&=\sum_{l\ge 0}[C_p^{[l]}]t^l\\\notag
&=[J^{[0]}]+[J^{[0]}]t+[J^{[1]}]t^2+[J^{[1]}]t^3+\cdots [J^{[l]}]t^{2l}+[J^{[l]}]t^{2l+1}+\cdots \\\notag
&=\sum_{l=0}^{d-1}[J^{(l)}](t^{2l}+t^{2l+1})+\sum_{l\ge 2d}[J^{(c)}]t^l\\\notag
&=\frac{1}{1-t}\left\{(1-t)\sum_{i=0}^{d-1}[J^{(l)}](t^{2l}+t^{2l+1})+[J^{(d)}]t^{2d}\right\}\\\notag
&=\frac{1}{1-t}\left\{\sum_{l=0}^{d-1}[J^{(l)}](t^{2l}-t^{2l+2})+[J^{(d)}]t^{2d}\right\}\\\notag
&=\frac{1}{1-t}\left\{\sum_{l=0}^{d-1}\sum^l_{i=0}\mathbb{L}^i(t^{2l}-t^{2l+2})+\sum^d_{i=0}\mathbb{L}^it^{2d}\right\}\\\label{MHZ for A_{2d}}
&=\frac{1}{1-t}\left(\sum_{i=0}^{d}\mathbb{L}^it^{2i}\right).
\end{align}

\subsection{The \mathversion{bold}{$E_6$} and \mathversion{bold}{$E_{8}$}-singularities}
In this subsection, we calculate the motivic Hilbert zeta functions for the $E_6$ and $E_{8}$-singularities. 
In \cite{SW1}, the sets $\mathrm{Mod}_l(\Gamma)$ $(l=1,\dots,c)$ are determined for both singularities. 
Furthermore, in the same paper, we also showed that the singularities of types $E_6$ and $E_8$ satisfy the condition~\,$(*)$ 
(i.e. for any $l$, all $\Delta$-subsets in the decomposition\,(\ref{decomposition}) are isomorphic to affine spaces).
Below, we listed these results as Table\,1, 2, 3 and 4. 
Finally, we derive the motivic Hilbert zeta functions from them. 

\noindent
\textbf{The \mathversion{bold}{$E_6$}-singularity:}
Let $(X,o)$ be the $E_6$-singularity (i.e. the curve singularity with $R=k[[t^3,t^4]]$). 
For this singularity, we have $\Gamma=\langle 3,4\rangle$, $\delta=3$ and $c=6$. 

\begin{center}
\begin{tabular}{c|l}
$l$&All elements of $\mathrm{Mod}_l(\Gamma)$\\
\hline
0&$\Delta_{0,1}=\langle 0\rangle_\Gamma$\\
\hline
1&$\Delta_{1,1}=\langle 3,4\rangle_\Gamma$\\
\hline
2&$\Delta_{2,1}=\langle 4,6\rangle_\Gamma$, $\Delta_{2,2}=\langle 3,8\rangle_\Gamma$\\
\hline
3&$\Delta_{3,1}=\langle 6,7,8\rangle_\Gamma$, $\Delta_{3,2}=\langle 4,9\rangle_\Gamma$, $\Delta_{3,3}=\langle 3\rangle_\Gamma$\\
\hline
4&$\Delta_{4,1}=\langle 7,8,9\rangle_\Gamma$, $\Delta_{4,2}=\langle 6,8\rangle_\Gamma$, $\Delta_{4,3}=\langle 6,7\rangle_\Gamma$, $\Delta_{4,4}=\langle 4\rangle_\Gamma$\\
\hline
5&$\Delta_{5,1}=\langle 8,9,10\rangle_\Gamma$, $\Delta_{5,2}=\langle 7,9\rangle_\Gamma$, $\Delta_{5,3}=\langle 7,8\rangle_\Gamma$, $\Delta_{5,4}=\langle 6,11\rangle_\Gamma$\\
\hline
6&$\Delta_{6,1}=\langle 9,10,11\rangle_\Gamma$, $\Delta_{6,2}=\langle 8,10\rangle_\Gamma$, $\Delta_{6,3}=\langle 8,9\rangle_\Gamma$, 
$\Delta_{6,4}=\langle 7,12\rangle_\Gamma$\\
&$\Delta_{6,5}=\langle 6\rangle_\Gamma$\\
\multicolumn{2}{c}{\textbf {Table\,}{\mathversion{bold}$1$}}
\end{tabular} 
\end{center}
All $\Delta$-subsets for $\Gamma$-modules in Table\,1 are isomorphic to affine spaces. \cite{SW1}. 
\begin{center}
\begin{tabular}{c|l}
$l$&$\Delta$-subsets in  the decomposition\,(\ref{decomposition})\\
\hline
0&$\mathcal{I}(\Delta_{0,1})\cong\mathbb{A}^0$\\
\hline
1&$\mathcal{I}(\Delta_{1,1})\cong\mathbb{A}^0$\\
\hline
2&$\mathcal{I}(\Delta_{2,1})\cong\mathbb{A}^0$, $\mathcal{I}(\Delta_{2,2})\cong \mathbb{A}^1$\\
\hline
3&$\mathcal{I}(\Delta_{3,1})\cong \mathbb{A}^0$, $\mathcal{I}(\Delta_{3,2})\cong \mathbb{A}^1$, $\mathcal{I}(\Delta_{3,3})\cong \mathbb{A}^2$\\
\hline
4&$\mathcal{I}(\Delta_{4,1})\cong  \mathbb{A}^0$, $\mathcal{I}(\Delta_{4,2})\cong  \mathbb{A}^1$, $\mathcal{I}(\Delta_{4,3})\cong  \mathbb{A}^2$, 
$\mathcal{I}(\Delta_{4,4})\cong \mathbb{A}^2$\\
\hline
5&$\mathcal{I}(\Delta_{5,1})\cong \mathbb{A}^0$, $\mathcal{I}(\Delta_{5,2})\cong \mathbb{A}^1$, $\mathcal{I}(\Delta_{5,3})\cong \mathbb{A}^2$, 
$\mathcal{I}(\Delta_{5,4})\cong \mathbb{A}^2$\\
\hline
6&$\mathcal{I}(\Delta_{6,1})\cong \mathbb{A}^0$, $\mathcal{I}(\Delta_{6,2})\cong \mathbb{A}^1$, $\mathcal{I}(\Delta_{6,3})\cong \mathbb{A}^2$, 
$\mathcal{I}(\Delta_{6,4})\cong \mathbb{A}^2$, $\mathcal{I}(\Delta_{6,5})\cong \mathbb{A}^3$\\
\multicolumn{2}{c}{\textbf {Table\,}{\mathversion{bold}$2$}}
\end{tabular} 
\end{center}
The motivic Hilbert zeta function follows from Table\,2. 
\begin{align}\notag
Z_{C,p}^{\mathrm{Hilb}}(t,\mathbb{L})=&1+t+(1+\mathbb{L})t^2+(1+\mathbb{L}+\mathbb{L}^2)t^3+(1+\mathbb{L}+2\mathbb{L}^2)t^4+(1+\mathbb{L}+2\mathbb{L}^2)t^5\\\notag
&+(1+\mathbb{L}+2\mathbb{L}^2+\mathbb{L}^3)t^6\sum_{i=0}^{\infty}t^i\\\label{MHZ for E_6}
=&\frac{1}{1-t}(1+\mathbb{L}t^2+\mathbb{L}^2t^3+\mathbb{L}^2t^4+\mathbb{L}^3t^6)
\end{align}

\noindent
\textbf{The \mathversion{bold}$E_8$-singularity:}
Let $(X,o)$ be the $E_8$-singularity (i.e. the curve singularity with $R=k[[t^3,t^5]]$). 
We have $\Gamma=\langle 3,5\rangle$, $\delta=4$ and $c=8$. 

\begin{center}
\begin{tabular}{c|l}
$l$&All elements of $\mathrm{Mod}_l(\Gamma)$\\
\hline
0&$\Delta_{0,1}=\langle 0\rangle_\Gamma$\\
\hline
1&$\Delta_{1,1}=\langle 3,5\rangle_\Gamma$\\
\hline
2&$\Delta_{2,1}=\langle 5,6\rangle_\Gamma$, $\Delta_{2,2}=\langle 3,10\rangle_\Gamma$\\
\hline
3&$\Delta_{3,1}=\langle 6,8,10\rangle_\Gamma$, $\Delta_{3,2}=\langle 5,9\rangle_\Gamma$, $\Delta_{3,3}=\langle 3\rangle_\Gamma$\\
\hline
4&$\Delta_{4,1}=\langle 8,9,10\rangle_\Gamma$, $\Delta_{4,2}=\langle 6,10\rangle_\Gamma$, $\Delta_{4,3}=\langle 6,8\rangle_\Gamma$, $\Delta_{4,4}=\langle 5,12\rangle_\Gamma$\\
\hline
5&$\Delta_{5,1}=\langle 9,10,11\rangle_\Gamma$, $\Delta_{5,2}=\langle 8,10,12\rangle_\Gamma$, $\Delta_{5,3}=\langle 8,9\rangle_\Gamma$, $\Delta_{5,4}=\langle 6,13\rangle_\Gamma$\\
&$\Delta_{5,5}=\langle 5\rangle_\Gamma$\\
\hline
6&$\Delta_{6,1}=\langle 10,11,12\rangle_\Gamma$, $\Delta_{6,2}=\langle 9,11,13\rangle_\Gamma$, $\Delta_{6,3}=\langle 9,10\rangle_\Gamma$, $\Delta_{6,4}=\langle 8,12\rangle_\Gamma$\\ &$\Delta_{6,5}=\langle 8,10\rangle_\Gamma$, $\Delta_{6,6}=\langle 6\rangle_\Gamma$\\
\hline
7&$\Delta_{7,1}=\langle 11,12,13\rangle_\Gamma$, $\Delta_{7,2}=\langle 10,12,14\rangle_\Gamma$, $\Delta_{7,3}=\langle 10,11\rangle_\Gamma$\\
&$\Delta_{7,4}=\langle 9,13\rangle_\Gamma$, $\Delta_{7,5}=\langle 9,11\rangle_\Gamma$, $\Delta_{7,6}=\langle 8,15\rangle_\Gamma$\\
\hline
8&$\Delta_{8,1}=\langle 12,13,14\rangle_\Gamma$, $\Delta_{8,2}=\langle 11,13,15\rangle_\Gamma$, $\Delta_{8,3}=\langle 11,12\rangle_\Gamma$\\
&$\Delta_{8,4}=\langle 10,14\rangle_\Gamma$, $\Delta_{8,5}=\langle 10,12\rangle_\Gamma$, $\Delta_{8,6}=\langle 9,16\rangle_\Gamma$,  $\Delta_{8,7}=\langle 8\rangle_\Gamma$\\
\multicolumn{2}{c}{\textbf {Table\,}{\mathversion{bold}$3$}}
\end{tabular} 
\end{center}

Same as the case of the $E_8$-singularity, all $\Delta$-subsets for $\Gamma$-semi-moudle in Table\,3 are isomorphic to affine spaces. 
\begin{center}
\begin{tabular}{c|l}
$l$&Affine cells in the decomposition\,(\ref{decomposition})\\
\hline
0&$\mathcal{I}(\Delta_{0,1})\cong\mathbb{A}^0$\\
\hline
1&$\mathcal{I}(\Delta_{1,1})\cong\mathbb{A}^0$\\
\hline
2&$\mathcal{I}(\Delta_{2,1})\cong\mathbb{A}^0$, $\mathcal{I}(\Delta_{2,2})\cong\mathbb{A}^1$\\
\hline
3&$\mathcal{I}(\Delta_{3,1})\cong\mathbb{A}^0$, $\mathcal{I}(\Delta_{3,2})\cong\mathbb{A}^1$, $\mathcal{I}(\Delta_{3,3})\cong\mathbb{A}^2$\\
\hline
4&$\mathcal{I}(\Delta_{4,1})\cong\mathbb{A}^0$, $\mathcal{I}(\Delta_{4,2})\cong\mathbb{A}^1$, $\mathcal{I}(\Delta_{4,3})\cong\mathbb{A}^2$, $\mathcal{I}(\Delta_{4,4})\cong\mathbb{A}^2$\\
\hline
5&$\mathcal{I}(\Delta_{5,1})\cong\mathbb{A}^0$, $\mathcal{I}(\Delta_{5,2})\cong\mathbb{A}^1$, $\mathcal{I}(\Delta_{5,3})\cong\mathbb{A}^2$, $\mathcal{I}(\Delta_{5,4})\cong\mathbb{A}^2$, $\mathcal{I}(\Delta_{5,5})\cong\mathbb{A}^3$\\
\hline
6&$\mathcal{I}(\Delta_{6,1})\cong\mathbb{A}^0$, $\mathcal{I}(\Delta_{6,2})\cong\mathbb{A}^1$, $\mathcal{I}(\Delta_{6,3})\cong\mathbb{A}^2$, $\mathcal{I}(\Delta_{6,4})\cong\mathbb{A}^2$, $\mathcal{I}(\Delta_{6,5})\cong\mathbb{A}^3$\\
&$\mathcal{I}(\Delta_{6,6})\cong\mathbb{A}^3$\\
\hline
7&$\mathcal{I}(\Delta_{7,1})\cong\mathbb{A}^0$, $\mathcal{I}(\Delta_{7,2})\cong\mathbb{A}^1$, $\mathcal{I}(\Delta_{7,3})\cong\mathbb{A}^2$, $\mathcal{I}(\Delta_{7,4})\cong\mathbb{A}^2$, $\mathcal{I}(\Delta_{7,5})\cong\mathbb{A}^3$\\
&$\mathcal{I}(\Delta_{7,6})\cong\mathbb{A}^3$\\
\hline
8&$\mathcal{I}(\Delta_{8,1})\cong\mathbb{A}^0$, $\mathcal{I}(\Delta_{8,2})\cong\mathbb{A}^1$, $\mathcal{I}(\Delta_{8,3})\cong\mathbb{A}^2$, $\mathcal{I}(\Delta_{8,4})\cong\mathbb{A}^2$, $\mathcal{I}(\Delta_{8,5})\cong\mathbb{A}^3$\\
&$\mathcal{I}(\Delta_{8,6})\cong\mathbb{A}^3$, $\mathcal{I}(\Delta_{8,7})\cong\mathbb{A}^4$\\
\multicolumn{2}{c}{\textbf {Table\,}{\mathversion{bold}$4$}}
\end{tabular} 
\end{center}

The motivic Hilbert zeta function follows from Table\,4. 
\begin{align}\notag
Z_{C,p}^{\mathrm{Hilb}}(t,\mathbb{L})=&1+t+(1+\mathbb{L})t^2+(1+\mathbb{L}+\mathbb{L}^2)t^3+(1+\mathbb{L}+2\mathbb{L}^2)t^4\\\notag
&+(1+\mathbb{L}+2\mathbb{L}^2+\mathbb{L}^3)t^5
+(1+\mathbb{L}+2\mathbb{L}^2+2\mathbb{L}^3)t^6\\\notag
&+(1+\mathbb{L}+2\mathbb{L}^2+2\mathbb{L}^3)t^7+(1+\mathbb{L}+2\mathbb{L}^2+2\mathbb{L}^3+\mathbb{L}^4)t^8\sum_{i=0}^{\infty}t^i\\\label{MHZ for E_8}
=&\frac{1}{1-t}(1+\mathbb{L}t^2+\mathbb{L}^2t^3+\mathbb{L}^2t^4+\mathbb{L}^3t^5+\mathbb{L}^3t^6+\mathbb{L}^4t^8)
\end{align}

\section{An application}\label{application}
As an application of the results obtained in the previous sections,  
we consider another topic that is related to certain curve counting theories on Calabi-Yau 3-folds in this section.
For the background on these counting theories, refer to \cite{MS} and \cite{PT1}. 
\subsection{Generalization of Kawai's result}
Let $C$ be an integral, locally planar, complex algebraic curve. 
Pandharipande and Thomas \cite{PT2} showed the existences of integers $n_h$ such that 
\begin{equation}\label{BPS}
\sum_{l=0}^{\infty}\chi(C^{[l]})q^{l+1-g}= \sum_{h=\tilde{g}}^gn_h\left\{\frac{q}{(1-q)^2}\right\}^{1-h}
\end{equation}
where $g$ and $\tilde{g}$ are the arithmetic and the geometric genera of $C$ respectively. 
Since $q/(1-q)^2=(q^{\frac{1}{2}}-q^{-\frac{1}{2}})^{-2}$, the left hand side of (\ref{BPS}) is expressed in terms of $(q^{\frac{1}{2}}-q^{-\frac{1}{2}})^{-2}$. 
Actually, for a nonsingular complex projective curve $C$ of arithmetic genus $g$, Macdonald \cite{Macdonald} showed that the Euler numbers $\chi(C^{(l)})$ of the $l$-fold symmetric product $C^{(l)}$ satisfy the following relation:
\begin{equation}\label{Macdonald formula}
\sum_{l=0}^{\infty}\chi(C^{(l)})q^{l+1-g}=(q^{\frac{1}{2}}-q^{-\frac{1}{2}})^{2g-2}
\end{equation}
Here note that $C^{(l)}$ coincides with $C^{[l]}$ for a nonsingular curve $C$.  
Kawai \cite{Kawai} showed that an analogue of (\ref{Macdonald formula}) holds for an integral complex projective curve $C$ of arithmetic genus $g$  
with $m$ $\mathbb{A}_1$-sigularities and $n$ $\mathbb{A}_2$-singularities. 
\begin{equation}\label{Kawai formula}
\sum_{l=0}^{\infty}\chi(C^{[l]})q^{l+1-g}=(q^{\frac{1}{2}}-q^{-\frac{1}{2}})^{2g-2}\left\{1+\frac{1}{(q^{\frac{1}{2}}-q^{-\frac{1}{2}})^2}\right\}^m\left\{1+\frac{2}{(q^{\frac{1}{2}}-q^{-\frac{1}{2}})^2}\right\}^n
\end{equation}
He also discussed the relation between (\ref{Kawai formula}) and the results of \cite{GV}. 

Shende \cite{S} showed that the numbers $n_h$ in (\ref{BPS}) are determined by the local data of the singular points on $C$. 
Actually, under the condition $(*)$, the numbers  $n_h$ depend on motivic Hilbert zeta functions for the singular points on $C$. 
Indeed, it follows from Theorem\,\ref{main 2} that 
\begin{align}\notag
\sum_{l=0}^{\infty}\chi(C^{[l]})q^l&=\left( \sum_{l=0}^{\infty}\chi((C\setminus \coprod_{i}^{s}p_i)^{[l]})q^l\right)\prod_{i=0}^s\left( \sum_{l=0}^{\infty}\chi(C_{p_i}^{[l]})q^l \right)\\ \label{Shende}
&=(1-q)^{2\tilde{g}-2+\sum bi}\prod_{i=0}^{s}Z^{\mathrm{Hilb}}_{C,p_i}(q, 1)
\end{align}
where Sing\,$(C)=\{p_1,\cdots,p_s\}$ and $b_i$ is the number of branches at $p_i$. 
Using (\ref{Shende}), 
we generalize Kawai's result (\ref{Kawai formula})  as follows:
\begin{thm}\label{Generalization of Kawai's Thm}
Let $C$  be an integral complex projective curve of arithmetic genus $g$  
whose singularities consist of the $A_1$, $A_{2i}\ (i=1,\ldots, d)$, $E_6$ and $E_8$-singularities. 
The numbers of the singularities are 
$\#\{A_1\text{-sing.}\}=m_0$, 
$\#\{A_{2i}\text{-sing.}\}=m_i$, 
$\#\{E_{6}\text{-sing.}\}=m_{d+1}$ and
$\#\{E_{8}\text{-sing.}\}=m_{d+2}$. 
There exist polynomials $G_i(T)$ $(i=1,\ldots,d+2)$ in $\mathbb{Z}[T]$ such that
\begin{align*}
\sum_{l=0}^{\infty}\chi(C^{[l]})q^{l+1-g}
&=(q^{\frac{1}{2}}-q^{-\frac{1}{2}})^{2g-2}\left\{1+\frac{1}{(q^{\frac{1}{2}}-q^{-\frac{1}{2}})^2}\right\}^{m_0}
\prod_{i=1}^d\left\{1+\frac{G_i\big((q^{\frac{1}{2}}-q^{-\frac{1}{2}})^{2}+2\big)}{(q^{\frac{1}{2}}-q^{-\frac{1}{2}}\big)^{2i}}\right\}^{m_i}\\
&\times \left\{1+\frac{G_{d+1}\big((q^{\frac{1}{2}}-q^{-\frac{1}{2}})^{2}+2\big)}{(q^{\frac{1}{2}}-q^{-\frac{1}{2}}\big)^{6}}\right\}^{m_{d+1}}
\times \left\{1+\frac{G_{d+2}\big((q^{\frac{1}{2}}-q^{-\frac{1}{2}})^{2}+2\big)}{(q^{\frac{1}{2}}-q^{-\frac{1}{2}}\big)^{8}}\right\}^{m_{d+2}}.
\end{align*}
Especially, we have $G_{d+1}(T)=6T^2-14T+9$ and $G_{d+2}(T)=8T^3-27T^2+33T-15$. 
\end{thm}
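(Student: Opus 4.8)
The plan is to reduce the global statement to a product of purely local contributions, one for each singular point, and then to read off each local factor from the motivic Hilbert zeta functions already computed in Section~\ref{examples}. First I would start from the factorization (\ref{Shende}), which expresses $\sum_{l}\chi(C^{[l]})q^{l}$ as $(1-q)^{2\tilde g-2+\sum b_i}\prod_i Z^{\mathrm{Hilb}}_{C,p_i}(q,1)$. Multiplying by $q^{1-g}$ to pass to the normalized left-hand side of the theorem and dividing by the Macdonald factor, I would invoke the identity
\[
(q^{\frac12}-q^{-\frac12})^{2g-2}=q^{1-g}(1-q)^{2g-2}
\]
together with the genus relation $g=\tilde g+\sum_i\delta_{p_i}$ valid for an integral curve. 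Since $2\tilde g-2g=-2\sum_i\delta_{p_i}$, the quotient becomes
\[
(1-q)^{\sum_i(b_i-2\delta_{p_i})}\prod_i Z^{\mathrm{Hilb}}_{C,p_i}(q,1)=\prod_i\Big((1-q)^{b_i-2\delta_{p_i}}Z^{\mathrm{Hilb}}_{C,p_i}(q,1)\Big),
\]
so the whole expression factors as $(q^{\frac12}-q^{-\frac12})^{2g-2}$ times a product of local factors $F_p(q):=(1-q)^{b_p-2\delta_p}Z^{\mathrm{Hilb}}_{C,p}(q,1)$, each depending only on the analytic type of $p$. Grouping equal types then accounts for the exponents $m_0,m_1,\dots,m_{d+2}$ in the statement.

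Next I would evaluate $F_p(q)$ for each type by substituting the zeta functions of (\ref{MHZ for A_1}), (\ref{MHZ for A_{2d}}), (\ref{MHZ for E_6}) and (\ref{MHZ for E_8}) at $\mathbb{L}=1$. For the node $A_1$ one has $b=2$, $\delta=1$, and the elementary identity $1-q+q^2=(1-q)^2+q$ gives $F_{A_1}(q)=1+q/(1-q)^2=1+(q^{\frac12}-q^{-\frac12})^{-2}$, recovering Kawai's factor. For the unibranch types $A_{2i}$, $E_6$, $E_8$ one has $b=1$, and writing $\delta$ for the respective delta invariant and $N_p(q)$ for the numerator of $(1-q)Z^{\mathrm{Hilb}}_{C,p}(q,1)$, the factor takes the shape
\[
F_p(q)-1=\frac{N_p(q)-(1-q)^{2\delta}}{(1-q)^{2\delta}}.
\]

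The key point, and the one requiring genuine argument rather than bookkeeping, is to show that $q^{-\delta}\big(N_p(q)-(1-q)^{2\delta}\big)$ is invariant under $q\mapsto q^{-1}$, hence equal to a polynomial $G(q+q^{-1})$ with integer coefficients. Combined with $(q^{\frac12}-q^{-\frac12})^{2\delta}=(1-q)^{2\delta}q^{-\delta}$ and $(q^{\frac12}-q^{-\frac12})^2+2=q+q^{-1}$, this produces exactly the asserted form $1+G\big((q^{\frac12}-q^{-\frac12})^2+2\big)/(q^{\frac12}-q^{-\frac12})^{2\delta}$. Invariance of $q^{-\delta}(1-q)^{2\delta}=\big((1-q)^2/q\big)^{\delta}$ is immediate, so the real content is the palindromic symmetry of $q^{-\delta}N_p(q)$, which is the manifestation of the Gorenstein symmetry of the numerical semigroup: $N_p$ is self-reciprocal of degree $2\delta=c$, and since both $N_p$ and $(1-q)^{2\delta}$ share constant and leading terms $1$, their difference is divisible by $q$ and symmetric, giving a polynomial of degree $\le\delta-1$ in $T=q+q^{-1}$.

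For the finitely many cases $E_6$ ($\delta=3$) and $E_8$ ($\delta=4$) this is a direct computation: expressing $q^{j}+q^{-j}$ through the Chebyshev-type relations $q^2+q^{-2}=T^2-2$ and $q^3+q^{-3}=T^3-3T$ yields $G_{d+1}(T)=6T^2-14T+9$ and $G_{d+2}(T)=8T^3-27T^2+33T-15$, as claimed. The main obstacle is handling the entire family $A_{2i}$ $(i=1,\dots,d)$ uniformly rather than case by case: here $N_p(q)=\sum_{j=0}^{i}q^{2j}$, and I would establish the symmetry of $q^{-i}\big(\sum_{j=0}^{i}q^{2j}-(1-q)^{2i}\big)$ in general, which guarantees the existence of $G_i\in\mathbb{Z}[T]$ for every $i$ at once and completes the proof.
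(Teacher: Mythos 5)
Your proposal is correct and follows the same global strategy as the paper's proof: both start from the factorization (\ref{Shende}), use $\tilde g=g-m_0-\sum_i m_i\delta_i$ to reorganize the powers of $1-q$ into $(q^{\frac12}-q^{-\frac12})^{2g-2}$ times purely local factors, substitute the explicit zeta functions (\ref{MHZ for A_1}), (\ref{MHZ for A_{2d}}), (\ref{MHZ for E_6}), (\ref{MHZ for E_8}) at $\mathbb{L}=1$, and finish with the identity $(q^{\frac12}-q^{-\frac12})^{2}=(1-q)^{2}/q$. The one place where you genuinely diverge is the treatment of the infinite family $A_{2i}$: the paper establishes the existence of $G_i$ by constructing it explicitly (Proposition\,\ref{w2}), expanding $(1-q)^{2i}$ binomially and rewriting each $q^{j}+q^{-j}$ via the Chebyshev-type polynomials $F_j$ of Lemma\,\ref{w4}, whereas you argue abstractly that $q^{-i}\bigl(\sum_{j=0}^{i}q^{2j}-(1-q)^{2i}\bigr)$ is a symmetric integer Laurent polynomial with exponents between $-(i-1)$ and $i-1$ (the top and bottom terms cancel), hence lies in $\mathbb{Z}[q+q^{-1}]$. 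Your route is shorter, makes the degree bound $\deg G_i\le i-1$ transparent, and, resting only on the palindromicity of the numerator (Gorenstein symmetry of the semigroup), would extend verbatim to any unibranch Gorenstein singularity satisfying $(*)$; the paper's computation instead buys an explicit closed formula for $G_i$ in terms of binomial coefficients, which the existence statement does not require but which is convenient for concrete evaluation. Both arguments dispose of the $A_1$, $E_6$ and $E_8$ factors by the same finite check, so the two proofs are equivalent in substance.
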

\begin{rem}
It follows from  {\rm (\ref{Kawai formula})} that $G_1(x)=2$ in Theorem\,{\rm \ref{Generalization of Kawai's Thm}}. 
In Subsection\,{\rm \ref{Proof of Kawai's Thm}} below, we give an explicit construction of $G_i(x)$ $(i=1,\ldots,d)$. 
\end{rem}

\subsection{Proof of Theorem\,\ref{Generalization of Kawai's Thm}}\label{Proof of Kawai's Thm}
In this subsection, we prove Theorem\,\ref{Generalization of Kawai's Thm}. 
We first define polynomials $F_i(T)\in\mathbb{Z}[T]$ $(i\ge 0)$ as follows:
\begin{equation}
F_0(T):=2,\ F_1(T):=T \ \text{and}\  F_i(T):=TF_{i-1}(T)-F_{i-2}(T) \text{ for $i\ge 2$}
\end{equation}

These polynomials have the following properties:
\begin{lem}\label{w4}
We have $F_i(q+1/q)=q^i+1/q^i$ for $i\ge 1$.
\end{lem}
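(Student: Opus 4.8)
The plan is to prove the identity by strong induction on $i$, exploiting the two-term recurrence that defines the $F_i$. First I would record the auxiliary base value $F_0(q+1/q) = 2 = q^0 + q^{-0}$; although $i=0$ lies outside the range $i \geq 1$ asserted in the lemma, this value is needed to seed the recursion. The genuine base case $i=1$ is immediate from $F_1(T) = T$, giving $F_1(q+1/q) = q + q^{-1} = q^1 + q^{-1}$.

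For the inductive step I would fix $i \geq 2$ and assume the identity for all smaller indices, in particular $F_{i-1}(q+1/q) = q^{i-1} + q^{-(i-1)}$ and $F_{i-2}(q+1/q) = q^{i-2} + q^{-(i-2)}$. Substituting $T = q + 1/q$ into the defining relation $F_i(T) = T F_{i-1}(T) - F_{i-2}(T)$ then gives
\begin{equation*}
F_i(q+1/q) = (q + q^{-1})\big(q^{i-1} + q^{-(i-1)}\big) - \big(q^{i-2} + q^{-(i-2)}\big).
\end{equation*}
Expanding the product produces $q^i + q^{-(i-2)} + q^{i-2} + q^{-i}$, and the two middle terms cancel exactly against the subtracted $F_{i-2}(q+1/q)$, leaving $q^i + q^{-i}$. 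This is precisely the claimed value, so the induction closes.

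There is no substantive obstacle in this argument; the only point demanding care is the bookkeeping of base cases. Because the recurrence consumes the two preceding terms, an induction beginning only at $i=1$ would leave the $i=2$ step unsupported. I would therefore state the induction hypothesis as covering all indices up to $i-1$, with the $i=0$ seed value $F_0(q+1/q)=2$ included, so that both $F_{i-1}(q+1/q)$ and $F_{i-2}(q+1/q)$ are available whenever $i \geq 2$. With that convention in place the verification above completes the proof.
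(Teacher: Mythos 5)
Your proof is correct and follows exactly the route the paper intends: the paper's own proof consists only of the remark that the claim follows by induction on $i$, and your argument supplies the missing details of that induction. Your care with the two base cases, including the seed value $F_0(q+1/q)=2$ needed to start the two-term recurrence, is exactly the bookkeeping the paper leaves implicit.
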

\begin{proof}
We can prove this assertion by the mathematical induction on $i$. 
\end{proof}

Next, we define polynomials $G_i(x)$ for positive integers $i$. 
Set 
\begin{equation*}
G_1(T):=F_0(T)\ \text{and}\ G_2(T):=4F_1(T)-5.
\end{equation*}
For an  odd number $i=2i'+1\,(\ge 3)$, set
\begin{equation*}
G_i(x):=\sum_{l=0}^{i'-1}\binom{2i}{2l+1}F_{i-(2l+1)}(T)+\sum_{l=1}^{i'}\left\{1-\binom{2i}{2l}\right\}F_{i-2l}(T)+\binom{2i}{i}. 
\end{equation*}
On the other hand, for an even number $i=2i'\, (\ge 4)$, set
\begin{equation*}
G_i(x):=\sum_{l=0}^{i'-1}\binom{2i}{2l+1}F_{i-(2l+1)}(T)+\sum_{l=1}^{i'-1}\left\{1-\binom{2i}{2l}\right\}F_{i-2l}(T)+1-\binom{2i}{i}. 
\end{equation*}
\begin{prp}\label{w2}
For a positive integer $i$, we have
\begin{align*}
\frac{\sum_{l=0}^iq^{2l}}{(1-q)^{2i}}=1+\left\{\frac{q}{(1-q)^{2}}\right\}^iG_i\Big(q+\frac{1}{q}\Big).
\end{align*}
\end{prp}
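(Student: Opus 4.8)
The plan is to clear denominators and reduce the stated identity to the polynomial identity
\begin{equation*}
q^i G_i\Big(q+\tfrac{1}{q}\Big)=\sum_{l=0}^i q^{2l}-(1-q)^{2i}.
\end{equation*}
Indeed, subtracting $1$ from both sides of the asserted equation, writing the left-hand side over the common denominator $(1-q)^{2i}$, and multiplying through by $(1-q)^{2i}$ yields exactly this, since $\{q/(1-q)^2\}^i=q^i/(1-q)^{2i}$. Write $P(q):=\sum_{l=0}^i q^{2l}-(1-q)^{2i}$. The coefficient of $q^{2i}$ and the constant term of $P$ both vanish, and expanding $(1-q)^{2i}$ by the binomial theorem gives
\begin{equation*}
P(q)=\sum_{l=0}^{i}\Big(1-\binom{2i}{2l}\Big)q^{2l}+\sum_{l=0}^{i-1}\binom{2i}{2l+1}q^{2l+1},
\end{equation*}
where the $l=0$ and $l=i$ terms of the first sum are zero. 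Since $\binom{2i}{k}=\binom{2i}{2i-k}$, the polynomial $P$ is palindromic with respect to the exponent $2i$ (that is, $q^{2i}P(1/q)=P(q)$), so a priori $P(q)=q^i\tilde G(q+1/q)$ for a unique polynomial $\tilde G$; the content of the proposition is the explicit claim $\tilde G=G_i$.

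Next I would expand $q^i G_i(q+1/q)$ by means of Lemma~\ref{w4}. For $j\ge 1$ that lemma gives $q^i F_j(q+1/q)=q^{i+j}+q^{i-j}$, while $q^iF_0=2q^i$, so each summand $c\,F_j(q+1/q)$ of $G_i$ contributes the symmetric pair $c\,q^{i+j}+c\,q^{i-j}$ of monomials, placed symmetrically about the central exponent $i$. Treating the odd case $i=2i'+1$ first: the summand $\binom{2i}{2l+1}F_{i-(2l+1)}$ produces the monomials $q^{2l+1}$ and $q^{2i-2l-1}$, whose coefficient $\binom{2i}{2l+1}=\binom{2i}{2i-2l-1}$ matches $P$ at both of these odd exponents; the summand $\{1-\binom{2i}{2l}\}F_{i-2l}$ produces the even exponents $q^{2l}$ and $q^{2i-2l}$ with the matching coefficient $1-\binom{2i}{2l}=1-\binom{2i}{2i-2l}$; and the constant term $\binom{2i}{i}$ supplies the single central monomial $q^i$, which is an odd power since $i$ is odd. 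The even case $i=2i'$ is identical, except that $q^i$ is now an even power, so its coefficient must be $1-\binom{2i}{i}$, which is precisely the constant term occurring in the even-case definition of $G_i$.

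The step I expect to be the main obstacle is the purely combinatorial bookkeeping: one must check that as $l$ ranges over the prescribed limits the exponent pairs $\{2l+1,\,2i-2l-1\}$ and $\{2l,\,2i-2l\}$ tile $\{1,\dots,2i-1\}\setminus\{i\}$ exactly once, with neither overlap nor omission. This is exactly what forces the upper limits $i'-1,\,i'$ in the odd case and $i'-1,\,i'-1$ in the even case, together with the fact that the extreme coefficients at exponents $0$ and $2i$ vanish; the leftover central exponent $i$ is then filled by the constant term of $G_i$, with the two parities of $i$ accounting for the two forms $\binom{2i}{i}$ and $1-\binom{2i}{i}$. Finally I would dispose of the base cases $i=1,2$ by direct substitution into $G_1=F_0=2$ and $G_2=4F_1-5$, verifying $q\,G_1(q+1/q)=2q=P(q)$ and $q^2G_2(q+1/q)=4q^3-5q^2+4q=P(q)$; this also confirms that these separately defined values agree with the general formulas when the empty sums are omitted.
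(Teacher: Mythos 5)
Your proof is correct and follows the same route as the paper's: the paper likewise reduces the identity to $\sum_{l=0}^i q^{2l}-(1-q)^{2i}=q^iG_i(q+1/q)$ and invokes the binomial expansion together with Lemma~\ref{w4}. You have simply carried out in full the coefficient bookkeeping that the paper leaves implicit, including the verification of the base cases $i=1,2$.
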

\begin{proof}
For any positive integer $i$,  the relation
$$
\sum_{l=0}^iq^{2l}-(1-q)^{2i}=q^iG_i\Big(q+\frac{1}{q}\Big)
$$
follows from the binomial expansion of $(1-q)^{2i}$ and Lemma\,\ref{w4}. 
It implies our desired result.
\end{proof}
\noindent
\textit{Proof of Theorem\,{\rm \ref{Generalization of Kawai's Thm}}.} 
Let $p_0$, $p_i$ $(i=1,\dots,d)$, $p_{d+1}$ and $p_{d+2}$ be $A_1$, $A_{2i}$, $E_6$ and $E_8$-singularities on $C$ respectively. 
In our situation, (\ref{Shende}) becomes
\begin{equation}\label{w1}
\sum_{l=0}^{\infty}\chi(C^{[l]})q^l=(1-q)^{2\tilde{g}-2+2m_0+\sum m_i}\prod_{i=0}^{d+2}\left (\sum_{l=0}^{\infty}Z_{C,p_i}^{\mathrm{Hilb}}(q,1)\right )^{m_i}. 
\end{equation}
We have $\tilde{g}=g-m_0-\sum m_i\delta_i$ where $\delta_i$ is the $\delta$-invariant of $(C,p_i)$. 
Here recall that $\delta_i=i$ for $i=1,\ldots,d$, $\delta_{d+1}=3$ and $\delta_{d+2}=4$. 
It follows from (\ref{MHZ for A_1}), (\ref{MHZ for A_{2d}}) , (\ref{MHZ for E_6}), (\ref{MHZ for E_8}) and (\ref{w1}) that
\begin{align}\notag
&(1-q)^{2\tilde{g}-2+2m_0+\sum m_i}\left\{\frac{1-q+q^2}{(1-q)^2}\right\}^{m_0}\prod_{i=1}^{d}\Bigg (\frac{\sum_{l=0}^{i}q^{2l}}{1-q}\Bigg )^{m_i}\\\notag
&\qquad\qquad\qquad\qquad\qquad\qquad\times \Bigg (\frac{\sum_{l=0,l\neq 1,5}^{6}q^{l}}{1-q}\Bigg )^{m_{d+1}}
\times \Bigg (\frac{\sum_{l=0,l\neq 1,7}^{8}q^{l}}{1-q}\Bigg )^{m_{d+2}}\\ \notag
&=(1-q)^{2(g-m_0-\sum m_i\delta _i)-2}(1-q+q^2)^{m_0}\prod_{i=1}^{d}\Big (\sum_{l=0}^{i}q^{2l} \Big)^{m_i}\\\notag
&\qquad\qquad\qquad\qquad\qquad\qquad\times \Big (\sum_{l=0, l\neq 1,5}^{6}q^{l} \Big)^{m_{d+1}}
\times \Big (\sum_{l=0, l\neq 1,7}^{8}q^{l} \Big)^{m_{d+2}}\\ \notag
&=(1-q)^{2g-2}\left\{\frac{1-q+q^2}{(1-q)^2}\right\}^{m_0}\prod_{i=1}^{d}\Bigg \{\frac{\sum_{l=0}^{i}q^{2l}}{(1-q)^{2i}}\Bigg \}^{m_i}\\\label{w3}
&\qquad\qquad\qquad\qquad\qquad\qquad\times \Bigg \{\frac{\sum_{l=0,l\neq 1,5}^{6}q^{l}}{(1-q)^6}\Bigg \}^{m_{d+1}}
\times \Bigg \{\frac{\sum_{l=0,l\neq 1,7}^{8}q^{l}}{(1-q)^8}\Bigg \}^{m_{d+2}}. 
\end{align}

By direct calculations, we can check that
\begin{align}\label{w4}
\frac{1-q+q^2}{(1-q)^2}&=1+\frac{q}{(1+q)^2},\\\label{w5}
\frac{\sum_{l=0,l\neq 1,5}^{6}q^{l}}{(1-q)^6}&=1+\frac{q^3}{(1-q)^6}\left\{6\left(q+\frac{1}{q}\right)^2-14\left(q+\frac{1}{q}\right)+9\right\},\\\label{w6}
\frac{\sum_{l=0,l\neq 1,7}^{8}q^{l}}{(1-q)^8}&=1+\frac{q^4}{(1-q)^8}\left\{8\left(q+\frac{1}{q}\right)^3-27\left(q+\frac{1}{q}\right)^2+33\left(q+\frac{1}{q}\right)-15\right\}. 
\end{align}
Set $G_{d+1}(T):=6T^2-14T+9$ and $G_{d+2}(T):=8T^3-27T^2+33T-15$. 
It follows from (\ref{w3}), (\ref{w4}), (\ref{w5}), (\ref{w6}) and  Proposition\,\ref{w2} that
\begin{align*}
\sum_{l=0}^{\infty}\chi([C^{[l]}])q^{l+1-g}&=\left\{\frac{(1-q)^2}{q}\right\}^{g-1}\left\{1+\frac{q}{(1-q)^2}\right\}^{m_0}
\prod_{i=1}^{d}\left\{1+\frac{q^i}{(1-q)^{2i}}G_i\Big(q+\frac{1}{q}\Big)\right\}^{m_i}\\
&\times \left\{1+\frac{q^3}{(1-q)^{6}}G_{d+1}\Big(q+\frac{1}{q}\Big)\right\}^{m_{d+1}}
\times \left\{1+\frac{q^4}{(1-q)^{8}}G_{d+2}\Big(q+\frac{1}{q}\Big)\right\}^{m_{d+2}}. 
\end{align*}

Since the relations $(q^{\frac{1}{2}}-q^{-\frac{1}{2}})^2=q-2+1/q=(1-q)^2/q$ hold, the theorem is proved.  $\square$

\noindent
\small 
Masahiro Watari\\
 University of Kuala Lumpur, Malaysia France Institute\\
Japanese Collaboration Program\\
Section 14, Jalan Damai, Seksyen 14, 43650\\
Bandar Baru Bangi, Selengor, Malaysia.\\
E-mail:masahiro@unikl.edu.my
\end{document}